\numberwithin{equation}{section}
\theoremstyle{plain}
\newtheorem{theorem}{Theorem}[section]
\newtheorem{thm}[theorem]{Theorem}
\newtheorem{prop}[theorem]{Proposition}
\newtheorem{cor}[theorem]{Corollary}
\newtheorem{lem}[theorem]{Lemma}
\newtheorem{conj}[theorem]{Conjecture}
\theoremstyle{definition}
\newtheorem{defn}[theorem]{Definition}
\theoremstyle{remark}
\begin{document}


\title[A Kronecker limit formula for indefinite zeta functions]{A Kronecker limit formula for indefinite zeta functions}
\author{Gene S. Kopp}

\address{School of Mathematics, University of Bristol, Bristol, UK and Heilbronn Institute for Mathematical Research, Bristol, UK}
\email{gene.kopp@bristol.ac.uk}

\keywords{Kronecker limit formula, real quadratic field, indefinite quadratic form, indefinite theta function, Epstein zeta function, Stark conjectures}

\date{\today}

\begin{abstract} 
We prove an analogue of Kronecker's second limit formula for a continuous family of ``indefinite zeta functions''. Indefinite zeta functions were introduced in the author's previous paper as Mellin transforms of indefinite theta functions, as defined by Zwegers. Our formula is valid in dimension $g=2$ at $s=1$ or $s=0$. For a choice of parameters obeying a certain symmetry, an indefinite zeta function is a differenced ray class zeta function of a real quadratic field, and its special value at $s=0$ was conjectured by Stark to be a logarithm of an algebraic unit. Our formula also permits practical high-precision computation of Stark ray class invariants.
\end{abstract}

\maketitle

\section{Introduction}

In a previous paper \cite{kopp1}, we introduced indefinite zeta functions as Mellin transforms of certain indefinite theta functions associated to the intermediate Siegel half-space $\HH_g^{(1)}$, defined below. 
In this paper, we obtain a formula for the values of such an indefinite zeta function at $s=1$ or $s=0$, in 
the special case of dimension $g=2$. Such formulas are traditionally called Kronecker limit formulas, after Kronecker's first and second limit formulas giving the constant term in the Laurent expansion at $s=1$ of standard and twisted real analytic Eisenstein series. 

When our parameters are specialised appropriately, our special value is a finite linear combination of Hecke $L$-values at $s=1$. Our formula may be used to compute values of Hecke $L$-functions at $s=1$ (resp. $s=0$) relevant to the Stark conjectures, which we discuss in \Cref{subsec:15}.

For imaginary quadratic fields, Stark proved his conjectures using Kronecker's first and second limit formulas together with the theory of singular moduli \cite{stark1}. The Kronecker limit formulas give the constant Laurent series coefficient at $s=1$ for families of Dirichlet series continuously interpolating the ray class zeta functions $\zeta(s,A)$---namely, standard and twisted real analytic Eisenstein series (see \cite{tata} for details). We will discuss a generalisation of these Kronecker limit formulas, which correspond to the positive definite case, in \cite{kopp2}.

Kronecker limit formulas applicable to real quadratic fields were developed by Hecke, Herglotz, 
and Zagier (in analogy with the first Kronecker limit formula), and by Shintani (in analogy with the second Kronecker limit formula). As in the imaginary quadratic case, these formulas are obtained by continuously interpolating between ray class zeta functions using a larger family of functions. 
 Hecke's formula uses cycle integrals of real analytic Eisenstein series, whereas the formulas of Herglotz \cite{herglotz} and Zagier \cite{zagier} (see also \cite{duke2,radchenko}) and the formulas of Shintani \cite{shintanik,shintani,shintanicertain} use partial zeta functions defined by summing over a cone.
Analogues of the Kronecker limit formulas in other settings have been found by Liu and Masri \cite{liu}, Posingies \cite{posingies}, and Vlasenko and Zagier \cite{higher}, among others.

The main theorem of this paper supplies a new real quadratic analogue of Kronecker's second limit formula 
based on a new interpolation between ray class zeta functions. The interpolation is by the indefinite zeta functions introduced in \cite{kopp1}, which are Mellin transforms of nonholomorphic indefinite theta functions. Indefinite zeta functions have a nice functional equation, but they do not have a Dirichlet series representation for general parameters. 

The main results on indefinite zeta functions---stated in \Cref{sec:klfint}---require a lot of notation, defined in \Cref{sec:notation} and \Cref{sec:indefdef}. The proofs of the indefinite Kronecker limit formulas are provided in \Cref{sec:indefinite}.

\subsection{Notational conventions}\label{sec:notation}

We list some notational conventions used in the paper.
\begin{itemize}
\item $e(z) := \exp(2\pi i z)$ is the complex exponential, and this notation is used for $z \in \C$ not necessarily real.
\item $\HH := \{\tau : \im{\tau}>0\}$ is the complex upper half-plane.
\item Non-transposed vectors $v \in \C^g$ are always column vectors; the transpose $v^\top$ is a row vector.
\item If $M$ is a $g \times g$ matrix, then $M^\top$ is its transpose, and (when $M$ is invertible) $M^{-\top}$ is a shorthand for $\left(M^{-1}\right)^\top$.
\item $Q_M(v)$ denotes the quadratic form $Q_M(v) := \foh v^\top M v$, where $M$ is a $g \times g$ matrix, and $v$ is a $g \times 1$ column vector.
\item $\left.f(c)\right|_{c=c_1}^{c_2} := f(c_2)-f(c_1)$, where $f$ is any function taking values in an additive group.
\item If $v = \smcoltwo{v_1}{v_2} \in \C^2$ and $f$ is a function of $\C^2$, we may write $f(v)$ as $f\!\smcoltwo{v_1}{v_2}$ rather than $f\!\left(\smcoltwo{v_1}{v_2}\right)$.
\item We often express $\Omega = i M + N$ where $M, N$ are real $g \times g$ symmetric matrices; $N$ and $M$ will always have real entries even when we do not say so explicitly.
\end{itemize}

We use complex logarithms throughout this paper. If $f(\t)$ is any nonvanishing holomorphic function on the upper half plane $\HH$, there is some holomorphic function $(\Log f)(\t)$ such that $\exp\left((\Log f)(\t)\right) = f(\t)$, because $\HH$ is simply connected. Specifying a single value (or the limit as $\t$ approaches some element of $\R \cup \{\infty\}$) specifies $\Log f$ uniquely. It won't necessarily be true that $(\Log f)(\t) = \log(f(\t))$. 

Conventions for square roots, when not specified, follow \cite{kopp1}. See Section 2.3 and Section 3.2 therein for details.

We recall the definition of the Siegel intermediate half-space, as defined in \cite{kopp1}.
\begin{defn}
For $0 \leq k \leq g$, we define the \textbf{Siegel intermediate half-space} of genus $g$ and index $k$ to be
\begin{equation}
\HH_g^{(k)} := \{\Omega \in \M_g(\C) : \Omega = \Omega^\top \mbox{ and } \im(\Omega) \mbox{ has signature } (g-k,k)\}.
\end{equation}
\end{defn}
The $\HH_g^{(k)}$ are the open orbits of the action of $\Sp_{2g}(\R)$ by fractional linear transformations on the space of complex symmetric matrices. In particular, $\HH_g^{(0)}$ is the usual Siegel upper half-space.

\subsection{Indefinite theta and zeta functions}\label{sec:indefdef}

We review the relevant definitions from \cite{kopp1}.

\begin{defn}
For any complex number $\alpha$, define the function
\begin{equation}
\eE(\alpha) := \int_{0}^\alpha e^{-\pi u^2}\,du,
\end{equation}
where the integral runs along any contour from $0$ to $\alpha$.
\end{defn}

\begin{defn}
Let $\Omega=iM+N$ be a complex symmetric matrix whose imaginary part has signature $(g-1,1)$; that is, $\Omega \in \HH_g^{(1)}$. Define the (nonholomorphic) \textbf{indefinite theta function}
\begin{equation}\label{eq:theta1}
\Theta^{c_1,c_2}(z,\Omega) := \sum_{n \in \Z^g} \left.\eE\!\left(\frac{c^\top\im(\Omega n+z)}{\sqrt{-\foh c^\top \im(\Omega) c}}\right)\right|_{c=c_1}^{c_2} e\!\left(\frac{1}{2}n^\top\Omega n + n^\top z\right),
\end{equation}
where $z \in \C^g$, $c_1, c_2 \in \C^g$, 
$\ol{c_1}^\top M c_1 < 0$, and $\ol{c_2}^\top M c_2 < 0$.
\end{defn}

Nonholomorphic indefinite theta functions were first studied by Vign\'{e}ras \cite{vig1,vig2} and were rediscovered by Zwegers \cite{zwegers}.
Zwegers's theta function is defined for real $c_j$ when $N$ is a scalar multiple of $M$. 
More precisely, if $M$ is real symmetric matrix of signature $(g-1,1)$, $\tau \in \HH$, and $c_1,c_2 \in \R^g$,
then $\Theta^{c_1,c_2}(Mz,\tau M)$
is equal up to an exponential factor to the function $\vartheta^{c_1,c_2}_M(z,\tau)$ introduced by Zwegers on page 27 of \cite{zwegers}. Our theta functions extend Zwegers's to the Siegel modular setting; a related generalisation has also been studied by Roehrig \cite{roehrig}.

\begin{defn}\label{def:indefchar}
Let $\Omega=iM+N \in \HH_g^{(1)}$. 
Define the \textbf{indefinite theta null with characteristics $p,q \in \R^g$}:
\begin{align}
\Theta^{c_1,c_2}_{p,q}(\Omega) &:= e\!\left(\foh q^\top\Omega q + p^\top q\right) \Theta^{c_1,c_2}\!\left(p+\Omega q; \Omega\right).
\end{align}
where $c_1, c_2 \in \C^g$, 
$\ol{c_1}^\top M c_1 < 0$, and $\ol{c_2}^\top M c_2 < 0$.
\end{defn}

We define the indefinite zeta function using a Mellin transform of the indefinite theta function with characteristics.

\begin{defn}
Let $\Omega=iM+N \in \HH^{(1)}_g$. 
The \textbf{completed indefinite zeta function} is
\begin{equation}
\widehat{\zeta}^{c_1,c_2}_{p,q}(\Omega,s) := \int_0^\infty \Theta^{c_1,c_2}_{p,q}(t\Omega)t^s\frac{dt}{t}, \label{eq:indefzdef1}
\end{equation}
where $p,q \in \R^g$, and $c_1, c_2 \in \C^g$ are parameters satisfying
$\ol{c_1}^\top M c_1 < 0$ and $\ol{c_2}^\top M c_2 < 0$.
\end{defn}

The completed indefinite zeta function has an analytic continuation and satisfies a functional equation, which is Theorem 1.1 of \cite{kopp1}.

\begin{thm}[Analytic continuation and functional equation for $\widehat{\zeta}^{c_1,c_2}_{p,q}(\Omega,s)$]\label{thm:zetafun}
The function $\widehat{\zeta}^{c_1,c_2}_{p,q}(\Omega,s)$ may be analytically continued to an entire function on $\C$.
It satisfies the functional equation 
\begin{equation}
\widehat{\zeta}^{c_1,c_2}_{p,q}\!\left(\Omega,\frac{g}{2}-s\right) = \frac{e(p^\top q)}{\sqrt{\det(-i\Omega)}} \widehat{\zeta}^{\ol\Omega c_1,\ol \Omega c_2}_{-q,p}\!\left(-\Omega^{-1},s\right).
\end{equation}
\end{thm}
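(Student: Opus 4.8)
The plan is to follow the classical Hecke--Riemann strategy of deducing the functional equation of a zeta function from a modular transformation law for the associated theta function, carried out in the error-function-completed setting of Vign\'eras and Zwegers. Everything will descend from a single transformation law for the completed theta null under the involution $\Omega\mapsto-\Omega^{-1}$, namely
\begin{equation}\label{eq:thetaFE-sketch}
\Theta^{c_1,c_2}_{p,q}(\Omega) = \frac{e(p^\top q)}{\sqrt{\det(-i\Omega)}}\,\Theta^{\ol\Omega c_1,\ol\Omega c_2}_{-q,p}\!\left(-\Omega^{-1}\right),
\end{equation}
with the branch of the square root fixed by the conventions of \cite{kopp1}. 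I would first check that the right-hand side is well posed: since $\Omega\mapsto-\Omega^{-1}$ is the $S$-transformation of $\Sp_{2g}(\R)$ it maps $\HH_g^{(1)}$ to itself, and one verifies that $\ol{(\ol\Omega c_j)}^\top\im(-\Omega^{-1})(\ol\Omega c_j)<0$ whenever $\ol{c_j}^\top M c_j<0$, so the transformed cone vectors remain admissible.

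To prove \eqref{eq:thetaFE-sketch} I would apply Poisson summation to the lattice sum defining $\Theta^{c_1,c_2}_{p,q}$. The summand is the product of the Gaussian $e\!\left(\foh n^\top\Omega n + n^\top z\right)$ with the $\eE$-difference weight; the role of the $\eE$-completion is exactly to make this weighted Gaussian a Schwartz function whose Fourier transform is again of the same shape, so that Poisson summation closes up. This is where Vign\'eras's framework enters: her differential equation characterises the weights for which such indefinite theta series transform as modular forms, and the $\eE$-difference is engineered to satisfy it. The bookkeeping then produces the automorphy factor $\det(-i\Omega)^{-1/2}$, the interchange $(p,q)\mapsto(-q,p)$ of the characteristics, and the passage $c_j\mapsto\ol\Omega c_j$ of the cone data.

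Granting \eqref{eq:thetaFE-sketch}, the analytic continuation and the functional equation for $\widehat\zeta$ come out together by the split-integral Mellin argument. I would write $\widehat\zeta^{c_1,c_2}_{p,q}(\Omega,s)=\int_0^1+\int_1^\infty$, apply \eqref{eq:thetaFE-sketch} (with $\Omega$ replaced by $t\Omega$) to the first integral, and substitute $t\mapsto 1/t$. Two features make this clean. First, the argument of $\eE$ in \eqref{eq:theta1} is homogeneous of degree $0$ in $c$, so the factor of $t$ produced by $\ol{t\Omega}c_j=t\,\ol\Omega c_j$ is invisible and the cone vectors become simply $\ol\Omega c_j$; together with $\det(-it\Omega)=t^g\det(-i\Omega)$ and $-(t\Omega)^{-1}=\tfrac{1}{t}(-\Omega^{-1})$ this sends the $s$-exponent to $\tfrac{g}{2}-s$. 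Second---and this is the point that makes $\widehat\zeta$ entire rather than merely meromorphic---the $\eE$-completion forces $\Theta^{c_1,c_2}_{p,q}(t\Omega)$ to decay faster than any power of $t$ as $t\to\infty$, because the error-function weight suppresses precisely the negative and null directions of $M$ along which the Gaussian $e^{-\pi t\,n^\top M n}$ would otherwise grow or fail to decay, leaving no term tending to a nonzero limit. By \eqref{eq:thetaFE-sketch} the same decay holds as $t\to0^+$. Hence both $\int_1^\infty\Theta^{c_1,c_2}_{p,q}(t\Omega)t^s\frac{dt}{t}$ and the transformed $\int_1^\infty\Theta^{\ol\Omega c_1,\ol\Omega c_2}_{-q,p}(t(-\Omega^{-1}))t^{g/2-s}\frac{dt}{t}$ converge for every $s\in\C$ and define entire functions, and assembling them yields both the continuation and the stated functional equation after replacing $s$ by $\tfrac{g}{2}-s$.

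The main obstacle is \eqref{eq:thetaFE-sketch} itself: justifying the termwise Poisson summation requires the decay estimates on $\eE$ (Gaussian decay of $\eE(\alpha)-\tfrac{1}{2}$ as $\alpha\to+\infty$, and the analogous behaviour in the cut plane) that let me interchange summation with the error-function integral, compute the relevant Fourier transform, and pin down the branch of $\det(-i\Omega)^{-1/2}$ uniformly. Once the transformation law is in hand with all constants and branches fixed, the Mellin argument is routine; I would only need to confirm, via the same $\eE$-estimates, that the decay of $\Theta^{c_1,c_2}_{p,q}(t\Omega)$ is uniform enough to guarantee that the convergence of the split integrals is locally uniform in $s$.
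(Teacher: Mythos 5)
Your strategy---a theta transformation law under $\Omega\mapsto-\Omega^{-1}$ proved by Poisson summation in the Vign\'eras--Zwegers framework, combined with the Riemann split-Mellin argument and the exponential decay of the $\eE$-completed theta null---is exactly the route taken in the source: note that this paper does not prove \Cref{thm:zetafun} but imports it as Theorem 1.1 of \cite{kopp1}, where the proof proceeds precisely as you outline. Your sketch is correct in approach and in the bookkeeping (degree-$0$ homogeneity in $c$, $\det(-it\Omega)=t^g\det(-i\Omega)$, the swap $(p,q)\mapsto(-q,p)$), deferring only the same technical core (the Poisson-summation computation and the decay estimates) that \cite{kopp1} carries out in detail.
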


\subsection{Kronecker limit formulas for indefinite zeta functions}\label{sec:klfint}

The Kronecker limit formula for indefinite zeta functions involves the dilogarithm function and a rapidly convergent integral of a logarithm of an infinite product. We also require the following definition of the function $\kappa^c_{\Omega}(v)$, which is the square root of a rational function and will appear as a factor in the integrand.

\begin{defn}\label{defn:kappa}
Suppose $\Omega = iM+N \in \HH_2^{(1)}$, $c \in \C^2$ satisfying $\ol{c}^\top M c < 0$, $v \in \C^2$, and $s \in \C$. 
Let $\Lambda_\Omega^c := \Omega - \frac{i}{Q_M(c)}Mcc^\top M$. 
Then, we define
\begin{equation}
\k_{\Omega}^{c}(v) := \frac{c^\top Mv}{4\pi i \sqrt{-Q_M(c)} Q_\Omega(v)\sqrt{-2iQ_{\Lambda_\Omega^c}(v)}}.
\end{equation}
\end{defn}
We now state the formula.

\begin{thm}[Indefinite Kronecker limit formula at $s=1$]\label{thm:KLF30}
Let $\Omega = iM+N \in \HH_2^{(1)}$, $p = \smcoltwo{p_1}{p_2} \in \R^2 \setminus \Z^2$, and $c_1,c_2 \in \C^2$ such that $\ol{c_j}^\top M c_j < 0$. 
For $c = c_1,c_2$, factor the quadratic form 
\begin{equation}
Q_{\Lambda_\Omega^c}\!\coltwo{\xi}{1} = \alpha(c)(\xi-\t^+(c))(\xi-\t^-(c)),
\end{equation}
where $\t^+(c)$ is in the upper half-plane and $\t^-(c)$ is in the lower half-plane.
Then, 
\begin{align}
\widehat{\zeta}^{c_1,c_2}_{p,0}(\Omega,1) &= 
I^+(c_2) - I^-(c_2) - I^+(c_1) + I^-(c_1),
\end{align}
where
\begin{align}
I^{\pm}(c) &:= 
-\Li_2(e(\pm p_1))\k^{c}_{\Omega}\!\coltwo{1}{0} \nn \\
&\ \ \ +2i\int_0^\infty \left(\Log \varphi_{p_1,\pm p_2}\right)(\pm\tau^{\pm}(c)+it) \k_{\Omega}^{c}\!\coltwo{\pm\left(\tau^{\pm}(c)+it\right)}{1}\,dt.
\end{align}
The function $\varphi_{p_1,p_2} : \HH \to \C$ is defined by the a product expansion,
\begin{equation}
\varphi_{p_1,p_2}(\xi) :=
(1-e(p_1\xi_t+p_2))\prod_{d=1}^\infty\frac{1-e\left((d+p_1)\xi+p_2\right)}{1-e\left((d-p_1)\xi-p_2\right)},
\end{equation}
and its logarithm $\left(\Log \varphi_{p_1,p_2}\right)(\xi)$ is the unique continuous branch with the property
\begin{equation}
\lim_{\xi \to i\infty} \left(\Log\varphi_{p_1,p_2}\right)(\xi) = \left\{\begin{array}{ll}
\log(1-e(p_2)) & \mbox{ if } p_1 = 0, \\
0 & \mbox{ if } p_1 \neq 0.
\end{array}\right.
\end{equation}
Here $\log(1-e(p_2))$ is the standard principal branch.
\end{thm}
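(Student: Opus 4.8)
The plan is to substitute the integral definitions directly into the Mellin transform and reduce the genus-two lattice sum to a one-dimensional one attached to the completion $\Lambda_\Omega^c$. Setting $q=0$ removes the prefactor in \Cref{def:indefchar}, so $\Theta^{c_1,c_2}_{p,0}(t\Omega)=\Theta^{c_1,c_2}(p;t\Omega)$. Using $\im(t\Omega n+p)=tMn$ and $-\foh c^\top\im(t\Omega)c=-tQ_M(c)$, the argument of $\eE$ collapses to $\sqrt{t}\,\frac{c^\top Mn}{\sqrt{-Q_M(c)}}$ and the Gaussian factor to $e(tQ_\Omega(n))\,e(n^\top p)$. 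Interchanging the (absolutely convergent, by Zwegers's construction) sum over $n\in\Z^2$ with the $t$-integral at $s=1$, I would write $\widehat{\zeta}^{c_1,c_2}_{p,0}(\Omega,1)$ as the outer difference over $c=c_1,c_2$ of $\sum_n e(n^\top p)\int_0^\infty \eE\!\big(\sqrt t\,\tfrac{c^\top Mn}{\sqrt{-Q_M(c)}}\big)\,e(tQ_\Omega(n))\,dt$. Because the individual $c$-pieces carry the divergent $\tfrac12\operatorname{sgn}(c^\top Mn)$ tails, I expect to keep the $c_2-c_1$ difference intact throughout, which is exactly what produces the outer structure $\left.(\cdots)\right|_{c=c_1}^{c_2}$ in the statement. (Since $g/2-s=0$ at $s=1$, \Cref{thm:zetafun} shows this value also equals $\det(-i\Omega)^{-1/2}\,\widehat{\zeta}^{\ol\Omega c_1,\ol\Omega c_2}_{0,p}(-\Omega^{-1},0)$, so an $s=0$ companion formula would transport to it; I would nonetheless argue at $s=1$ directly.)

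The key analytic step is to trade the error function for a Gaussian governed by $\Lambda_\Omega^c$. Integrating the inner $t$-integral by parts and using $\frac{d}{dt}\eE(\sqrt t\,x_n)=\frac{x_n}{2\sqrt t}e^{-\pi t x_n^2}$ with $x_n=\tfrac{c^\top Mn}{\sqrt{-Q_M(c)}}$, the identity $e(tQ_\Omega(n))\,\exp\!\big(\pi t (c^\top Mn)^2/Q_M(c)\big)=e(tQ_{\Lambda_\Omega^c}(n))$ converts the integrand into a convergent Gaussian for $Q_{\Lambda_\Omega^c}$ (convergent because $\im Q_{\Lambda_\Omega^c}(n)>0$, the point of the completion). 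The boundary terms vanish on the $c_2-c_1$ difference, and the remaining half-order Mellin integral $\int_0^\infty t^{-1/2}e(tQ_{\Lambda_\Omega^c}(n))\,dt=(-2iQ_{\Lambda_\Omega^c}(n))^{-1/2}$ supplies precisely the square root in \Cref{defn:kappa}. The upshot I would record is the clean identity $\int_0^\infty \eE(\sqrt t\,x_n)\,e(tQ_\Omega(n))\,dt=-\k_{\Omega}^{c}(n)$, so that $\widehat{\zeta}^{c_1,c_2}_{p,0}(\Omega,1)=-\left.\sum_{n\in\Z^2}e(n^\top p)\,\k_{\Omega}^{c}(n)\right|_{c=c_1}^{c_2}$, a twisted Epstein/Eisenstein sum for the binary form $Q_{\Lambda_\Omega^c}$ at its edge of convergence.

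It remains to evaluate $S(c):=\sum_{n\in\Z^2}e(n^\top p)\,\k_{\Omega}^{c}(n)$, and this is exactly the regime of Kronecker's second limit formula — the hypothesis $p\notin\Z^2$ is what makes the twist conditionally convergent and removes any pole at $s=1$. Writing $n=\coltwo{n_1}{n_2}$, using homogeneity of $\k_{\Omega}^{c}$ and the factorization $Q_{\Lambda_\Omega^c}\!\coltwo{\xi}{1}=\alpha(c)(\xi-\t^+(c))(\xi-\t^-(c))$, I would carry out a one-dimensional reduction: Poisson summation (equivalently, contour integration of $1/\sqrt{Q_{\Lambda_\Omega^c}}$) in one coordinate, followed by explicit summation of the resulting twisted geometric series in the other. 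The two branch points $\t^{\pm}(c)$ of $\sqrt{Q_{\Lambda_\Omega^c}}$ furnish the two vertical-ray integrals $\int_0^\infty(\cdots)(\pm\t^{\pm}(c)+it)\,dt$, hence the split $S(c)=-(I^+(c)-I^-(c))$ that reproduces the stated signed combination $I^+(c_2)-I^-(c_2)-I^+(c_1)+I^-(c_1)$. The geometric series in $e((d\pm p_1)\xi\pm p_2)$ telescopes into the infinite product defining $\varphi_{p_1,\pm p_2}$, whose logarithm appears in the integrand, while the leading term at the cusp, where $\k_{\Omega}^{c}\!\coltwo{\xi}{1}\sim\xi^{-2}\k_{\Omega}^{c}\!\coltwo{1}{0}$, produces the closed contribution $-\Li_2(e(\pm p_1))\,\k_{\Omega}^{c}\!\coltwo{1}{0}$.

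The main obstacle will be analytic bookkeeping rather than any single hard estimate: every interchange of summation, integration by parts, and contour deformation takes place in a regime where the separate $c$-summands diverge, so each manipulation must be performed on the convergent $c_2-c_1$ difference (or a regularized truncation) and the regularization shown to drop out at the end. Secondary difficulties are pinning down the branch of $\Log\varphi_{p_1,p_2}$ so that it matches the prescribed limit at $i\infty$ (with the case distinction $p_1=0$ versus $p_1\neq0$), and propagating the square-root and orientation conventions of \cite{kopp1} through the half-order Mellin integral and the contour, since these fix the four signs $\pm I^{\pm}(c_j)$ and the identification of $\t^{+}(c)$ with the upper half-plane root. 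I would close by confirming that the assembled expression is independent of the regularization and equals the stated difference.
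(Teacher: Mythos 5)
Your outline follows essentially the same route as the paper: the single\nobreakdash-term identity $\int_0^\infty \eE(\sqrt{t}\,x_n)e(tQ_\Omega(n))\,dt = -\k^c_\Omega(n)$ with $x_n^2-2iQ_\Omega(n)=-2iQ_{\Lambda^c_\Omega}(n)$ is exactly \Cref{lem:eEeint} and \Cref{cor:k1}, the $n_2=0$ sublattice gives the $\Li_2$ term, Poisson summation in $n_1$ plus a geometric series in $n_2$ produces $\Log\varphi_{p_1,\pm p_2}$, and the vertical integrals come from collapsing a contour onto the branch cuts at $\tau^\pm(c)$. But two concrete gaps remain. First, the interchange of $\sum_{n\in\Z^2}$ with $\int_0^\infty dt$ at $s=1$ is not justified: $\k^{c_1,c_2}_\Omega(v)$ is homogeneous of degree $-2$, so the resulting lattice sum $\sum_n e(p^\top n)\k^{c_1,c_2}_\Omega(n)$ has terms of size $|n|^{-2}$ and is not absolutely convergent over $\Z^2$ (the absolute convergence "by Zwegers's construction" concerns the theta series at fixed $t$, not the interchanged object). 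You acknowledge a regularization is needed but never specify one, and the subsequent Poisson summation of a conditionally convergent double series is precisely the kind of step that can silently change the answer. The paper avoids this by reversing the order of operations: it Fourier\nobreakdash-expands the theta function in a unipotent parameter $\xi$ \emph{before} taking the Mellin transform (this is Poisson summation in $n_1$ performed at the theta level, where there is Gaussian decay), and then proves absolute convergence of the term\nobreakdash-by\nobreakdash-term Mellin transform for $\re(s)>\tfrac12$, which covers $s=1$ directly.

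Second, you never address the simple poles of $\xi\mapsto\k^c_\Omega\!\smcoltwo{\xi}{1}$ at the two roots $r_1,r_2$ of $Q_\Omega\!\smcoltwo{\xi}{1}=0$, which sit between the initial horizontal contour and the branch points $\tau^\pm(c)$. The residues there are $\frac{1}{\pi i\,\o_{11}(r_1-r_2)}$ and its negative, independent of $c$, so they cancel only in the combined $\k^{c_2}_\Omega-\k^{c_1}_\Omega$; the contour must be moved past $r_1,r_2$ \emph{before} the integral is split into separate $c_1$- and $c_2$-pieces. Since your target formula has the four separated terms $I^\pm(c_j)$, you must split at some point, and without this residue analysis the deformation of each individual piece onto its branch cut picks up spurious residue contributions. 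Supplying the $\re(s)>\tfrac12$ convergence argument (or an equivalent regularization) and the pole-crossing/residue-cancellation step would close the argument; as written, these are genuine omissions rather than routine bookkeeping.
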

The following specialisation looks somewhat simpler and contains all of the cases of arithmetic zeta functions $Z_A(s)$ associated to real quadratic fields.

\begin{thm}[Indefinite Kronecker limit formula at $s=1$, pure imaginary case]\label{thm:KLF3}
Let $M$ be a $2 \times 2$ real matrix of signature $(1,1)$, and let $\Omega=iM$. Let $p = \smcoltwo{p_1}{p_2} \in \R^2\setminus \Z^2$, and $c_1,c_2 \in \R^2$ such that $c_j^\top M c_j < 0$. Then,
\begin{align}
\widehat{\zeta}^{c_1,c_2}_{p,0}(\Omega,1) &= 
2i\im\left(I(c_2) - I(c_1)\right),
\end{align}
where
\begin{align}
I(c) &= 
-\Li_2(e(p_1))\k^{c}_{\Omega}\!\coltwo{1}{0} \nn \\
&\ \ \ +2i\int_0^\infty \left(\Log \varphi_{p_1,p_2}\right)(\tau(c)+it) \k_{\Omega}^{c}\!\coltwo{\tau(c)+it}{1}\,dt.
\end{align}
Here, $\Log \varphi_{p_1,p_2}$ and $\k_{\Omega}^{c}$ are defined as in the statement of \Cref{thm:KLF30}, and 
$\xi=\t(c)$ is the unique root of the quadratic polynomial $Q_{\Lambda_\Omega^c}\!\smcoltwo{\xi}{1}$ in the upper half plane.
\end{thm}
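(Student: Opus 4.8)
The plan is to deduce \Cref{thm:KLF3} directly from \Cref{thm:KLF30} by specialising to $N=0$ and real $c_1,c_2$, and then exploiting a reflection symmetry that is present only in this pure imaginary case. First I would record the structural simplification: when $\Omega=iM$ we have $\Lambda_\Omega^c = iM - \frac{i}{Q_M(c)}Mcc^\top M = i\Lambda'$ with $\Lambda'$ real symmetric, so $Q_{\Lambda_\Omega^c}\smcoltwo{\xi}{1}$ is $i$ times a real quadratic in $\xi$. Its two roots are therefore complex conjugates, and since one lies in the upper and one in the lower half-plane we get $\tau^-(c)=\overline{\tau^+(c)}$. Writing $\tau(c)=\tau^+(c)$, the ``$+$'' half of \Cref{thm:KLF30} becomes verbatim the quantity $I(c)$ of \Cref{thm:KLF3}, so $I^+(c)=I(c)$ with no further work.

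The whole theorem then reduces to the single identity $I^-(c)=\overline{I(c)}$ for each admissible $c$: granting it, $I^+(c)-I^-(c)=I(c)-\overline{I(c)}=2i\,\im(I(c))$, and subtracting the $c_1$ and $c_2$ contributions gives the claimed formula. (As a consistency check one verifies independently that $\widehat{\zeta}^{c_1,c_2}_{p,0}(iM,1)$ is purely imaginary, since the substitution $n\mapsto -n$ together with the oddness of $\eE$ gives $\overline{\Theta^{c_1,c_2}_{p,0}(tiM)}=-\Theta^{c_1,c_2}_{p,0}(tiM)$, which is exactly what $I^-(c)=\overline{I(c)}$ predicts.) To prove the identity I would establish three conjugation/reflection symmetries valid in this setting: (a) $\overline{\Li_2(e(p_1))}=\Li_2(e(-p_1))$ for $p_1\notin\Z$; (b) $\overline{\varphi_{p_1,p_2}(\xi)}=\varphi_{p_1,-p_2}(-\overline{\xi})$, which after matching the normalising limits upgrades to $\overline{(\Log\varphi_{p_1,p_2})(\xi)}=(\Log\varphi_{p_1,-p_2})(-\overline{\xi})$; and (c) a conjugation law for the weight, $\overline{\k_\Omega^c(v)}=\pm\,\k_\Omega^c(\overline{v})$, where $Q_\Omega$ and $Q_{\Lambda_\Omega^c}$ become $i$ times real forms and the ambiguous sign is pinned down by the square-root conventions of \cite{kopp1}. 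Symmetry (a), applied to the first terms, already matches the $\Li_2$ pieces of $I^-(c)$ and $\overline{I(c)}$, because $\k_\Omega^c\smcoltwo{1}{0}$ is real.

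Applying (a)--(c) to the integral defining $I(c)$ turns $\overline{I(c)}$ into an integral carrying the same factor $(\Log\varphi_{p_1,-p_2})(-\overline{\tau(c)}+it)$ that appears in $I^-(c)$. The main obstacle is that the two representations do not agree termwise: the weight $\k$ in $\overline{I(c)}$ is evaluated at $\overline{\tau(c)}-it$, whereas in $I^-(c)$ it is evaluated at $-\overline{\tau(c)}-it$, and these trace genuinely different rays --- indeed $\overline{I(c)}$ acquires an integrable $t^{-1/2}$ singularity at $t=0$ coming from the branch point of $\sqrt{-2iQ_{\Lambda_\Omega^c}}$ at the root, while $I^-(c)$ does not. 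Reconciling them is where essentially all the work lies: I would regard the $t$-integral as a contour integral in $\xi$, use the meromorphy of $\varphi$ and the explicit branch structure of $\k_\Omega^c\smcoltwo{\xi}{1}$ about the conjugate roots $\tau(c),\overline{\tau(c)}$, and deform the contour of $\overline{I(c)}$ onto that of $I^-(c)$, tracking the square-root branches fixed in \cite{kopp1} and checking that no residue or arc contribution survives. Once this contour matching and branch bookkeeping are complete, $I^-(c)=\overline{I(c)}$ follows and the theorem is immediate.
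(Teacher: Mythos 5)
Your proposal is correct and follows essentially the same route as the paper: the paper deduces \Cref{thm:KLF3} from \Cref{thm:KLF30} in a single sentence (``follows by specializing the variables, setting $\Omega = iM$ and restricting to $c_1,c_2 \in \R^g$''), and your argument --- $\Lambda_\Omega^c$ purely imaginary, hence $\tau^-(c) = \overline{\tau^+(c)}$, hence $I^-(c) = \overline{I^+(c)}$ via the conjugation symmetries of $\Li_2$, of $\varphi_{p_1,p_2}$, and of $\kappa_\Omega^c$ --- supplies exactly the details that sentence suppresses. If anything, your write-up is more careful than the paper's, which records none of this bookkeeping (including the branch/sign matching for $\kappa$ that you rightly identify as the only nontrivial point).
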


It is straightforward to use the functional equation for the indefinite zeta function to rephrase \Cref{thm:KLF30} and \Cref{thm:KLF3} as formulas for $\widehat{\zeta}_{0,q}^{c_1,c_2}(\Omega,0)$. 

\begin{thm}[Indefinite Kronecker limit formula at $s=0$]\label{thm:KLF40}
Let $\Omega = iM+N \in \HH_2^{(1)}$, $q = \smcoltwo{q_1}{q_2} \in \R^2 \setminus \Z^2$, and $c_1,c_2 \in \C^2$ such that $\ol{c_j}^\top M c_j < 0$. 
For $c = c_1,c_2$, factor the quadratic form 
\begin{equation}
Q_{\Lambda_{-\Omega^{-1}}^{\ol{\Omega}c}}\!\coltwo{\xi}{1} = \beta(c)(\xi-\omega^+(c))(\xi-\omega^-(c)),
\end{equation}
where $\omega^+(c)$ is in the upper half-plane and $\omega^-(c)$ is in the lower half-plane.
Then, 
\begin{align}
\widehat{\zeta}^{c_1,c_2}_{0,q}(\Omega,1) &= 
\frac{1}{\sqrt{\det(-i\Omega)}}\left(J^+(c_2) - J^-(c_2) - J^+(c_1) + J^-(c_1)\right),
\end{align}
where
\begin{align}
J^{\pm}(c) &:= 
-\Li_2(e(\mp q_1))\k^{\ol{\Omega}c}_{-\Omega^{-1}}\!\coltwo{1}{0} \nn \\
&\ \ \ +2i\int_0^\infty \left(\Log \varphi_{-q_1,\mp q_2}\right)(\pm\omega^{\pm}(c)+it) \k_{-\Omega^{-1}}^{\ol{\Omega}c}\!\coltwo{\pm\left(\omega^{\pm}(c)+it\right)}{1}\,dt.
\end{align}
Here, $\Log \varphi$ and $\k$ are defined as in the statement of \Cref{thm:KLF30}.
\end{thm}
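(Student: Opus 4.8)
The plan is to deduce this statement directly from the Kronecker limit formula at $s=1$ (\Cref{thm:KLF30}) by applying the functional equation for the completed indefinite zeta function (\Cref{thm:zetafun}), exactly as anticipated by the remark preceding the theorem. Since $g=2$, the functional equation reads $\widehat{\zeta}^{c_1,c_2}_{p,q}(\Omega,1-s) = \frac{e(p^\top q)}{\sqrt{\det(-i\Omega)}}\,\widehat{\zeta}^{\ol\Omega c_1,\ol\Omega c_2}_{-q,p}(-\Omega^{-1},s)$. The idea is to specialise its parameters so that the left-hand side becomes the object we want and the right-hand side is of the shape covered by \Cref{thm:KLF30}. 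Concretely, I would set $p=0$, leave $q$ and $c_1,c_2$ as given, and evaluate at $s=1$, so that $1-s=0$. Because $e(0^\top q)=1$, the functional equation collapses to $\widehat{\zeta}^{c_1,c_2}_{0,q}(\Omega,0) = \frac{1}{\sqrt{\det(-i\Omega)}}\,\widehat{\zeta}^{\ol\Omega c_1,\ol\Omega c_2}_{-q,0}(-\Omega^{-1},1)$. This identity is unconditional, since \Cref{thm:zetafun} is an equality of entire functions, so that all of the real work lies in rewriting the right-hand factor; note that it is the value at $s=0$ that is produced, consistently with the title of the theorem.

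To evaluate $\widehat{\zeta}^{\ol\Omega c_1,\ol\Omega c_2}_{-q,0}(-\Omega^{-1},1)$ I would invoke \Cref{thm:KLF30} with the substitution $\Omega \mapsto -\Omega^{-1}$, $p \mapsto -q$, and $c_j \mapsto \ol\Omega c_j$. Before doing so I must check that the transformed data satisfy the hypotheses of that theorem: that $-\Omega^{-1} \in \HH_2^{(1)}$, that $-q \in \R^2 \setminus \Z^2$, and --- the only nonformal point --- that the vectors $\ol\Omega c_j$ obey the signature condition $\ol{(\ol\Omega c_j)}^\top \im(-\Omega^{-1})(\ol\Omega c_j) < 0$ relative to the new imaginary part. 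The first holds because $\Omega \mapsto -\Omega^{-1}$ is the action of an element of $\Sp_4(\R)$ and the spaces $\HH_g^{(k)}$ are orbits under this action; the second is immediate from $q \notin \Z^2$. I expect the signature condition to be the main obstacle: it requires an explicit expression for $\im(-\Omega^{-1})$ in terms of $M$ and $N$, combined with $\ol{c_j}^\top M c_j < 0$. This compatibility is implicit in the validity of the functional equation and is presumably already recorded in \cite{kopp1}, so I would cite or briefly reproduce that linear-algebra identity here.

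Once the hypotheses are verified, the conclusion is a matter of relabeling. Under the substitution, the quadratic form $Q_{\Lambda_{\Omega}^{c}}\!\smcoltwo{\xi}{1}$ appearing in \Cref{thm:KLF30} becomes exactly $Q_{\Lambda_{-\Omega^{-1}}^{\ol\Omega c}}\!\smcoltwo{\xi}{1}$, so its roots $\tau^{\pm}$ become the quantities named $\omega^{\pm}(c)$, with the upper/lower half-plane assignment preserved by definition, and $\alpha(c)$ becomes $\beta(c)$. Each term $I^{\pm}$ then transforms into $J^{\pm}$: the dilogarithm argument satisfies $e(\pm p_1) = e(\mp q_1)$, the product-expansion subscripts satisfy $(p_1,\pm p_2) = (-q_1, \mp q_2)$, and $\k^{c}_{\Omega}$ becomes $\k^{\ol\Omega c}_{-\Omega^{-1}}$ evaluated at the same vectors $\smcoltwo{\pm(\omega^{\pm}(c)+it)}{1}$, matching the stated definition of $J^{\pm}$ termwise. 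Substituting $I^{\pm}(\ol\Omega c_j) = J^{\pm}(c_j)$ into the identity $\widehat{\zeta}^{\ol\Omega c_1,\ol\Omega c_2}_{-q,0}(-\Omega^{-1},1) = I^+(\ol\Omega c_2) - I^-(\ol\Omega c_2) - I^+(\ol\Omega c_1) + I^-(\ol\Omega c_1)$ and carrying along the prefactor $\frac{1}{\sqrt{\det(-i\Omega)}}$ yields the claimed formula, so no genuinely new analysis beyond \Cref{thm:KLF30} and the signature check is needed.
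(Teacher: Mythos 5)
Your proposal is correct and follows exactly the paper's route: the paper's entire proof of \Cref{thm:KLF40} is the one-line observation that it "follows by application of the functional equation (\Cref{thm:zetafun})" to \Cref{thm:KLF30}, which is precisely the specialisation $p=0$, $s=1$ and relabelling that you carry out. Your additional care in checking that $-\Omega^{-1}\in\HH_2^{(1)}$ and that the vectors $\ol\Omega c_j$ satisfy the negativity condition with respect to $\im(-\Omega^{-1})$ (facts established in \cite{kopp1} and implicit in the well-definedness of the right-hand side of the functional equation) is a reasonable elaboration, not a deviation.
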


\begin{thm}[Indefinite Kronecker limit formula at $s=0$, pure imaginary case]\label{thm:KLF4}
Let $M$ be a $2 \times 2$ real matrix of signature $(1,1)$, and let $\Omega=iM$. Let $q = \smcoltwo{q_1}{q_2} \in \R^2\setminus \Z^2$, and $c_1,c_2 \in \R^2$ such that $c_j^\top M c_j < 0$. Then,
\begin{align}
\widehat{\zeta}^{c_1,c_2}_{0,q}(\Omega,0) &= 
\frac{2i}{\sqrt{\det(M)}}\im\left(J(c_2) - J(c_1)\right),
\end{align}
where
\begin{align}
J(c) &= 
-\Li_2(e(-q_1))\k^{\ol{\Omega}c}_{-\Omega^{-1}}\coltwo{1}{0} \nn \\
&\ \ \ +2i\int_0^\infty \left(\Log \varphi_{-q_1,-q_2}\right)(\omega(c)+it) \k_{-\Omega^{-1}}^{\ol{\Omega}c}\coltwo{\omega(c)+it}{1}\,dt.\label{eq:klf4int}
\end{align}
Here, $\Log \varphi$ and $\k$ are defined as in the statement of \Cref{thm:KLF30}, and 
$\xi=\omega(c)$ is the unique root of the quadratic polynomial $Q_{\Lambda_{-\Omega^{-1}}^{\ol{\Omega}c}}\!\smcoltwo{\xi}{1}$ in the upper half plane.
\end{thm}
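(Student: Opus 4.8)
The statement is the pure-imaginary specialisation of \Cref{thm:KLF40}, so my plan is to set $\Omega = iM$ in \Cref{thm:KLF40} and show that the four-term combination collapses to $\frac{2i}{\sqrt{\det M}}\im\left(J(c_2)-J(c_1)\right)$. Equivalently, one may apply the functional equation of \Cref{thm:zetafun} with $g=2$, $s=1$, and characteristic $(0,q)$ to reduce $\widehat{\zeta}^{c_1,c_2}_{0,q}(\Omega,0)$ to $\frac{1}{\sqrt{\det(-i\Omega)}}\widehat{\zeta}^{\ol{\Omega} c_1,\ol{\Omega} c_2}_{-q,0}(-\Omega^{-1},1)$ and then invoke \Cref{thm:KLF30}; note that since $-\Omega^{-1}=iM^{-1}$ is again pure imaginary while $\ol{\Omega} c_j = -iMc_j$ is pure imaginary rather than real, \Cref{thm:KLF3} does not apply verbatim, and it is the general \Cref{thm:KLF30} that one must use. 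Either route reduces the theorem to the same symmetry, so I will work from \Cref{thm:KLF40}.

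First I would record the elementary specialisations at $\Omega = iM$: since $-i\Omega = M$ we have $\det(-i\Omega) = \det M$ and the prefactor is $\frac{1}{\sqrt{\det M}}$; moreover $-\Omega^{-1} = iM^{-1}$ and $\ol{\Omega} c = -iMc$. A short computation then gives $\Lambda^{\ol{\Omega} c}_{-\Omega^{-1}} = i\bigl(M^{-1} - \tfrac{1}{Q_M(c)}cc^\top\bigr)$, that is, $i$ times a real symmetric matrix. Consequently $Q_{\Lambda^{\ol{\Omega} c}_{-\Omega^{-1}}}\!\coltwo{\xi}{1}$ equals $i$ times a real quadratic in $\xi$, so its two roots are complex conjugates and $\omega^-(c) = \ol{\omega^+(c)}$; writing $\omega(c) = \omega^+(c)$ for the upper half-plane root identifies the $J(c)$ of the statement with $J^+(c)$.

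The heart of the argument is the conjugation symmetry $J^-(c) = \ol{J^+(c)}$, which yields $J^+(c) - J^-(c) = 2i\im J(c)$ and collapses the four terms to $2i\im\left(J(c_2)-J(c_1)\right)$. This is proved exactly as in the passage from \Cref{thm:KLF30} to \Cref{thm:KLF3}. For the dilogarithm term I would use $\ol{\Li_2(e(-q_1))} = \Li_2(e(q_1))$ (valid since $e(\pm q_1)$ lies on the unit circle and $\Li_2(\bar z)=\ol{\Li_2(z)}$ off the cut) together with the fact that, in the pure-imaginary case, $\k^{\ol{\Omega} c}_{-\Omega^{-1}}\!\coltwo{1}{0}$ is a real multiple of a fixed power of $i$ and so behaves predictably under conjugation. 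For the integral term the key input is the product identity
\[
\ol{\varphi_{p_1,p_2}(\xi)} = \varphi_{p_1,-p_2}(-\ol{\xi}),
\]
which I would check factor-by-factor from $\ol{e(w)} = e(-\ol w)$ and the reality of $p_1,p_2$, pinning down the branch by comparing the normalisations of $\Log\varphi$ at $i\infty$ on both sides. Applied with $(p_1,p_2)=(-q_1,-q_2)$ and $\xi = \omega^+(c)+it$, and using $\ol{\omega^+(c)} = \omega^-(c)$, this turns the conjugate of the $\Log\varphi$-factor of $J^+(c)$ into the $\Log\varphi$-factor of $J^-(c)$; one then matches the remaining $\k$-factors together with the sign coming from $\ol{2i} = -2i$.

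The main obstacle is precisely this last matching of the $\k$-factors under conjugation. Because $\Log\varphi$ and $\k$ are evaluated at the slightly different points $\pm\omega^\pm(c)+it$ and $\pm(\omega^\pm(c)+it)$, the identity $J^-(c)=\ol{J^+(c)}$ is not an equality of integrands at each fixed $t$; rather it reflects a reflection $\xi\mapsto -\ol{\xi}$ of the vertical contour along which these integrals are originally defined in \Cref{sec:indefinite}. The cleanest route is therefore to return to that contour-integral form, verify that complex conjugation carries the $J^+$-contour to the $J^-$-contour in the pure-imaginary case, and track the branches of the square roots inside $\k$ (and of $\sqrt{\det(-i\Omega)}$, following the conventions of \cite{kopp1}) through this reflection, checking that no spurious factor of $-1$ or shift by $2\pi i$ is introduced. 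Once the symmetry is in hand, assembling the prefactor $\frac{1}{\sqrt{\det M}}$ with $2i\im\left(J(c_2)-J(c_1)\right)$ gives the stated formula.
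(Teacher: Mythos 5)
Your proposal takes the same route as the paper, which derives \Cref{thm:KLF4} from \Cref{thm:KLF30}/\Cref{thm:KLF40} via the functional equation of \Cref{thm:zetafun} in a single sentence; the content you supply --- the observation that $\ol{\Omega}c_j = -iMc_j$ is not real so \Cref{thm:KLF3} cannot be invoked verbatim, the computation that $\Lambda^{\ol{\Omega}c}_{-\Omega^{-1}}$ is $i$ times a real symmetric matrix so that $\omega^-(c)=\ol{\omega^+(c)}$, and the conjugation symmetry $J^-(c)=\ol{J^+(c)}$ established by reflecting the contour under $\xi\mapsto-\ol{\xi}$ --- is precisely what the paper elides. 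Your plan is correct and is essentially a fleshed-out version of the paper's argument.
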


\subsection{Application: indefinite zeta functions, real quadratic fields, and Stark units}\label{subsec:15}

The Hecke $L$-value $L_K(1, \chi)$ contains arithmetic information that is not well-understood in general. The abelian Stark conjectures predict that this value is an algebraic number times a regulator $\Reg_\chi$, which is a determinant of a matrix of linear forms in logarithms of algebraic units in a particular abelian extension of the number field $K$ \cite{stark1,stark2,stark3,stark4}. This conjecture is known when the base field $K$ is equal to $\Q$ or an imaginary quadratic field, but not (for instance) when $K$ is a real quadratic field.

The rank 1 abelian Stark conjectures give a partial answer to Hilbert's 12th Problem, which asked for explicit generators for the abelian extensions of a number field in terms of special values of transcendental functions. Computationally, the Stark conjectures are used to calculate class fields in the computer algebra systems Magma and PARI/GP.

The Stark conjectures are most precisely formulated in the rank 1 case---that is, when $L_K(s,\chi)$ vanishes to order $1$ at $s=0$. The regulator $\Reg_\chi$ in that case is a determinant of a $1 \times 1$ matrix. The Stark conjectures are most succinctly written as a statement about the ray class zeta function (of a ray ideal class $A$)
\begin{equation}
\zeta(s,A) := \zeta_K(s,A) := \sum_{\aa \in A} N(A)^{-s}
\end{equation}
rather that as a statement about the Hecke $L$-function
\begin{equation}
L_K(s,\chi) = \sum_A \chi(A) \zeta(s,A).
\end{equation}

Just as definite zeta functions specialise to ray class zeta functions of imaginary quadratic fields, indefinite zeta functions specialise to \textit{differenced ray class zeta functions} of real quadratic fields. The full details of this specialisation are given in Section 7 of \cite{kopp1}.

\begin{defn}[Ray class zeta function]\label{defn:rayzeta}
Let $K$ be any number field and $\cc$ an ideal of the maximal order $\OO_K$. Let $S$ be a subset of the real places of $K$ (i.e., the embeddings $K \inj \R$). Let $A$ be a ray ideal class modulo $\cc S$, that is, an element of the group
\begin{equation}\label{eq:Cl}
\Cl_{\cc S}(\OO_K) := \frac{\{\mbox{nonzero fractional ideals of $\OO_K$ coprime to $\cc$}\}}{\{a\OO_K : a \con 1 \Mod{\cc} \mbox{ and $a$ is positive at each place in $S$}\}}.
\end{equation}
For $\re(s)>1$, define the \textbf{ray class zeta function of $A$} to be
\begin{equation}
\zeta(s,A) := \sum_{\aa \in A} N(\aa)^{-s}.
\end{equation}
\end{defn}
This function has a simple pole at $s=1$ with residue independent of $A$. The pole may be eliminated by considering the function $Z_A(s)$, defined as follows.

\begin{defn}[Differenced ray class zeta function]\label{defn:diffzeta}
Let $R$ be the element of $\Cl_{\cc S}(\OO_K)$ defined by 
\begin{equation}
R:= \{a\OO_K : a \con -1 \Mod{\cc} \mbox{ and $a$ is positive at each place in $S$}\}.
\end{equation}
For $\re(s)>1$, define the \textbf{differenced ray class zeta function of $A$} to be
\begin{equation}
Z_A(s) := \zeta(s,A) - \zeta(s,RA).
\end{equation}
\end{defn}
The function $Z_A(s)$ extends to a holomorphic function on the whole complex plane. The rank 1 abelian Stark conjecture says that $Z_A'(0)$ is the logarithm of an algebraic unit.

\begin{conj}[Stark \cite{stark3}]\label{conj:stark}
Let $K$ be a real quadratic field and $\{\rho_1, \rho_2\}$ the real embeddings of $K$. If $R$ is not the identity of $\Cl_{\cc\infty_2}(\OO_K)$, then $Z_A'(0) = \log(\rho_1(\e_A))$ for an algebraic unit $\e_A$ generating the ray class field $L_{\cc\infty_2}$ corresponding to $\Cl_{\cc\infty_2}(\OO_K)$.
The units are compatible with the Artin map: $\e_{\id }^{\Art(A)} = \e_A$.
\end{conj}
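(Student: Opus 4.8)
The plan is to follow the architecture of Stark's proof in the imaginary quadratic setting \cite{stark1}, replacing the classical Kronecker limit formulas --- and the real analytic Eisenstein series that there interpolate the ray class zeta functions --- by the indefinite Kronecker limit formula of \Cref{thm:KLF40} and the indefinite zeta functions that interpolate the \emph{differenced} ray class zeta functions here. The honest outcome of this strategy is to reduce \Cref{conj:stark} to a single concrete algebraicity statement, rather than to establish it outright.

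First I would invoke the specialisation worked out in Section 7 of \cite{kopp1} to express the completed differenced zeta function attached to $(\cc,\infty_2,A)$ as an indefinite zeta function $\widehat{\zeta}^{c_1,c_2}_{0,q}(\Omega,s)$, where $\Omega = iM$ with $M$ the norm form of $K$, the characteristic $q$ encodes the ray class $A$, and $c_1,c_2 \in \R^2$ are chosen so that the symmetry producing a genuine Dirichlet series holds. Because $R \neq \id$, the differencing cancels the pole at $s=1$ and forces $Z_A(0)=0$; the completion (Gamma) factor then contributes a simple pole at $s=0$, so that the \emph{finite} value $\widehat{\zeta}^{c_1,c_2}_{0,q}(\Omega,0)$ is an explicit nonzero constant times the quantity $Z_A'(0)$ appearing in the conjecture.

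Next I would apply the pure imaginary formula of \Cref{thm:KLF4} to evaluate $\widehat{\zeta}^{c_1,c_2}_{0,q}(\Omega,0)$ in closed form, obtaining $Z_A'(0)$ as a transcendental expression built from the dilogarithm $\Li_2$ and the rapidly convergent integral of $\Log \varphi_{-q_1,-q_2}$ against the square-root factor of \Cref{defn:kappa}, taken along the vertical ray above the point $\omega(c)$. Exponentiating, this expression defines a specific complex number $\e_A$ with $\log(\rho_1(\e_A)) = Z_A'(0)$; the integral representation is exactly what makes $\e_A$ computable to high precision, which is how one would first recognise it as an algebraic number and pin down the candidate unit.

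The hard part --- indeed the crux of the whole problem, and the reason \Cref{conj:stark} remains open for real quadratic $K$ --- is the arithmetic assertion that $\e_A$ is an algebraic unit lying in the ray class field $L_{\cc\infty_2}$ and that $\e_{\id}^{\Art(A)} = \e_A$. In the imaginary quadratic case the analogous algebraicity is supplied by the theory of singular moduli: the interpolating automorphic functions are evaluated at a single CM point, where complex multiplication forces algebraic values and Shimura reciprocity governs the Galois action. Here the relevant transcendental datum is instead an integral along a geodesic ray rather than a value at one special point, and there is no known analogue of complex multiplication to force the exponentiated integral to be algebraic; supplying that structure is essentially Hilbert's twelfth problem for $K$. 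Thus the formula of \Cref{thm:KLF4} cleanly isolates the analytic content and converts the conjecture into a precise, numerically testable algebraicity statement, but the algebraic step itself lies beyond the analytic methods developed in this paper.
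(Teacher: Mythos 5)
The statement you were asked about is \Cref{conj:stark}, which the paper presents as a \emph{conjecture} of Stark, not as a theorem: the paper explicitly states that the rank~1 abelian Stark conjecture is known when the base field is $\Q$ or imaginary quadratic ``but not (for instance) when $K$ is a real quadratic field,'' and it offers no proof. So there is no proof in the paper to compare your attempt against, and your proposal is honest on exactly this point: it does not claim to prove the conjecture, but instead explains how the paper's machinery bears on it. That assessment is correct.

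Your description of that bearing also matches what the paper actually does. \Cref{thm:special} and \Cref{cor:special} specialise the indefinite zeta function so that $Z_A'(0) = \widehat{\zeta}^{c_1,c_2}_{0,q}(iM,0)$, and \Cref{thm:KLF4} then gives a rapidly convergent closed-form expression for that value; \Cref{sec:example} uses this to compute a presumptive Stark unit $\exp(Z_I'(0))$ for $K=\Q(\sqrt{3})$ to $30$ digits and to identify its minimal polynomial numerically. As you say, the missing step --- that the exponentiated value is an algebraic unit in $L_{\cc\infty_2}$ compatible with the Artin map --- is precisely the open arithmetic content of the conjecture; there is no real-quadratic analogue of the theory of singular moduli to supply it, and neither you nor the paper closes that gap. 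The only caveat I would add is that your second paragraph's framing of the pole/differencing bookkeeping should be checked against \Cref{defn:diffzeta} and \Cref{thm:special} (the pole at $s=1$ is cancelled by differencing, and the factor $\Gamma(s)$ in \cref{eq:special} is what turns the value of $\widehat{\zeta}^{c_1,c_2}_{0,q}$ at $s=0$ into the derivative $Z_A'(0)$, as in the proof of \Cref{cor:special}); the substance is right.
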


The specialisation of the indefinite zeta function to a differenced real quadratic zeta function is given by the following result, which is Theorem 1.3 of \cite{kopp1}.

\begin{thm}[Specialisation of indefinite zeta function at $s=0$] 
\label{thm:special}
For each ray class $A \in \Cl_{\cc\infty_1\infty_2}(\OO_K)$ and integral ideal $\bb \in A^{-1}$, there exists a real symmetric $2 \times 2$ matrix $M$, vectors $c_1, c_2 \in \R^2$, and $q \in \Q^2$ such that
\begin{equation}\label{eq:special}
(2\pi N(\bb))^{-s}\Gamma(s)Z_A(s) = \widehat{\zeta}^{c_1,c_2}_{0,q}(iM,s).
\end{equation}
\end{thm}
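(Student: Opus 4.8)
The plan is to realise $Z_A(s)$ as a sum over lattice points in a fundamental sector for the unit action and then to recognise that sum as the Mellin transform of the explicit series $\Theta^{c_1,c_2}_{0,q}(tiM)$. First I would set up the usual dictionary between ideals and field elements: since $\bb \in A^{-1}$ is integral, every $\aa \in A$ satisfies $\aa\bb = (\mu)$ for some $\mu \in \bb$, well defined modulo the group $U$ of units that are totally positive and $\con 1 \Mod{\cc}$, with $N(\aa) = N(\mu)/N(\bb)$. Fixing a $\Z$-basis $\{\eta_1,\eta_2\}$ of the lattice $\cc\bb$, the ray condition $\mu \con \mu_0 \Mod{\cc\bb}$ becomes the statement that the coordinate vector $w$ of $\mu$ lies in $\Z^2 + q$, where $q \in \Q^2$ is the coordinate vector of a fixed representative $\mu_0$. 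I would then let $M$ be the symmetric matrix with $Q_M(w) = N(w_1\eta_1 + w_2\eta_2)$; since $N = \rho_1\cdot\rho_2$ is an indefinite norm form, $M$ has signature $(1,1)$, as required, and $Q_M(w) = N(\mu)$. Under this dictionary $\zeta(s,A) = N(\bb)^s \sum_\mu N(\mu)^{-s}$ with $\mu$ ranging over totally positive representatives, so that
\[
(2\pi N(\bb))^{-s}\Gamma(s)\,\zeta(s,A) = (2\pi)^{-s}\Gamma(s)\sum_w Q_M(w)^{-s},
\]
the sum running over the lattice points $w$ attached to $A$.

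The geometric input is that a fundamental domain for the infinite cyclic group $U = \langle\epsilon\rangle$ acting on the totally positive cone is the angular sector bounded by the rays through the embedding vectors of $1$ and of $\epsilon$. I would take $c_1, c_2 \in \R^2$ to be the $M$-orthogonal vectors to these two boundary rays; because the boundary rays have positive norm, their $M$-orthogonal complements satisfy $c_j^\top M c_j < 0$, which is exactly the standing hypothesis. With $\Omega = tiM$ and $p = 0$, a direct substitution collapses the theta series to
\[
\Theta^{c_1,c_2}_{0,q}(tiM) = \sum_{n \in \Z^2}\left[\eE\!\left(\sqrt{t}\,\frac{c_2^\top M(n+q)}{\sqrt{-Q_M(c_2)}}\right) - \eE\!\left(\sqrt{t}\,\frac{c_1^\top M(n+q)}{\sqrt{-Q_M(c_1)}}\right)\right] e^{-2\pi t\,Q_M(n+q)}.
\]
As $t \to \infty$ the bracketed weight tends to $\foh(\mathrm{sgn}(c_2^\top M w) - \mathrm{sgn}(c_1^\top M w))$, the indicator of the double wedge cut out by the two boundary rays, so the weight localises the sum to the fundamental sector. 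Crucially, $\eE$ is odd, so lattice points in the opposite (totally negative) cone enter with the opposite sign and correspond, via $w \mapsto -w$ together with the congruence $-\mu_0$, to the class $RA$; this is precisely the mechanism that produces the \emph{difference} $\zeta(s,A) - \zeta(s,RA) = Z_A(s)$ rather than $\zeta(s,A)$ alone.

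With these identifications, the remaining step is analytic. I would substitute the series into $\int_0^\infty \Theta^{c_1,c_2}_{0,q}(tiM)\,t^s\,\frac{dt}{t}$, justify interchanging sum and integral in a right half-plane, and use $\int_0^\infty e^{-2\pi t Q}t^s\,\frac{dt}{t} = \Gamma(s)(2\pi Q)^{-s}$ to produce $\Gamma(s)(2\pi)^{-s}Q_M(w)^{-s}$ on each sector point, matching the displays above. The hard part will be this analytic identification: since the $\eE$-weight depends on $t$, the per-term Mellin transform is not literally $\Gamma(s)(2\pi Q_M(w))^{-s}$ but carries a correction measuring the gap between $\eE(\sqrt{t}\,a)$ and its limiting sign value, and one must show that these corrections, summed over the lattice, reassemble into the convergent cone sum rather than spoiling it. I expect to handle them by writing $\eE(\sqrt{t}\,a)$ as its limiting sign value plus a Gaussian-tail remainder and estimating the tail against $e^{-2\pi t\,Q_M}$, paying separate attention to the lattice points lying on the boundary rays (shared between adjacent fundamental domains) and to confirming that no degenerate term with $n + q = 0$ occurs. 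The bookkeeping of $U$ as a finite-index subgroup of the full totally positive unit group, and the exact tracking of the constant $N(\bb)$, are then routine once the localisation is in place.
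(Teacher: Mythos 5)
First, a caveat on the comparison: the present paper does not prove \Cref{thm:special} at all --- it imports it as Theorem 1.3 of \cite{kopp1} and refers to Section 7 of that paper for the construction --- so your attempt can only be measured against that cited argument. Structurally, your setup matches it: the dictionary $\aa \mapsto \mu$ with $\aa\bb = (\mu)$, the norm form producing $M$ of signature $(1,1)$, the characteristic $q$ recording the class of $\mu_0$ modulo $\cc\bb$, the choice of $c_1, c_2$ as $M$-orthogonals of the boundary rays of a fundamental sector for the group $U$ of totally positive units congruent to $1 \bmod \cc$, and the observation that the oddness of $\eE$ is exactly what turns $\zeta(s,A)$ into the difference $Z_A(s) = \zeta(s,A) - \zeta(s,RA)$. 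Your specialisation of $\Theta^{c_1,c_2}_{0,q}(tiM)$ is also correct.

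The gap is in the final analytic step, which you flag but propose to close by the wrong means. The theorem is an exact identity, so the discrepancy between $\eE(\sqrt{t}\,a)$ and its limit $\foh\sgn(a)$ cannot be handled by ``estimating the tail against $e^{-2\pi t Q_M}$'': any estimate shows only that $\widehat{\zeta}^{c_1,c_2}_{0,q}(iM,s)$ differs from $(2\pi N(\bb))^{-s}\Gamma(s)Z_A(s)$ by a controlled error, not that the error vanishes. What kills the error is a symmetry, not a bound. Write $f(x) := \eE(x) - \foh\sgn(x)$; the total contribution of the $f$-terms attached to a single $c$ is $\sum_{n} f\bigl(\sqrt{t}\,c^\top M(n+q)/\sqrt{-Q_M(c)}\bigr)e^{-2\pi t Q_M(n+q)}$, and this sum is \emph{identical} for $c = c_1$ and $c = c_2$, because $c_2$ is (up to positive scaling, which the integrand does not see) the image $Ec_1$ of $c_1$ under the matrix $E$ of multiplication by the generator of $U$, and $E$ satisfies $E^\top M E = M$ and maps $\Z^2 + q$ to itself (the latter since $\epsilon_U \equiv 1 \bmod{\cc}$ forces $\epsilon_U \mu_0 \equiv \mu_0 \bmod{\cc\bb}$); reindexing the lattice by $E$ transforms one tail sum into the other. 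Hence the two Gaussian tails cancel exactly in $\rho^{c_1,c_2}_M$, leaving only the sharp indicator of the double wedge, whose termwise Mellin transform is literally $\Gamma(s)(2\pi Q_M(n+q))^{-s}$. Your prescription of $c_1, c_2$ as orthogonals to the boundary rays does imply $c_2 \parallel Ec_1$, but you never invoke this, and without it the proof does not close. The remaining issues you list (no term with $n+q=0$ since $q \notin \Z^2$, boundary lattice points, interchange of sum and integral for $\re(s)>1$) are genuine but routine.
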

We may use \Cref{thm:special} to compute presumptive Stark units $\exp(Z_A'(0))$. Specifically,

\begin{cor}\label{cor:special}
Under the specialisation given by \Cref{thm:special},
\begin{equation}
Z_A'(0) = \widehat{\zeta}^{c_1,c_2}_{0,q}(iM,0).
\end{equation}
\end{cor}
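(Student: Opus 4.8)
The plan is to read the value at $s=0$ directly off the identity in \Cref{thm:special} by cancelling the pole of the $\Gamma$-factor against a zero of $Z_A$. Writing the specialisation as
\begin{equation}
\widehat{\zeta}^{c_1,c_2}_{0,q}(iM,s) = (2\pi N(\bb))^{-s}\,\Gamma(s)\,Z_A(s),
\end{equation}
I would first record the structural facts about each side. By \Cref{thm:zetafun} the completed indefinite zeta function on the left is entire, hence finite at $s=0$. On the right, $(2\pi N(\bb))^{-s}$ is entire and nonvanishing, $Z_A(s)$ is holomorphic on all of $\C$ (as noted after \Cref{defn:diffzeta}), and $\Gamma(s)$ has a simple pole at $s=0$ with residue $1$.

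The crux of the argument is the vanishing $Z_A(0)=0$, which I would deduce from the displayed identity itself. Since the left-hand side is holomorphic at $s=0$ and $(2\pi N(\bb))^{-s}$ is holomorphic and nonzero there, the product $\Gamma(s)Z_A(s)$ must be holomorphic at $s=0$; as $\Gamma$ contributes a simple pole, $Z_A$ must supply a compensating zero, so $Z_A(0)=0$. Concretely, if $Z_A(0)\neq 0$ the right-hand side would behave like $Z_A(0)/s$ near $s=0$ and blow up, contradicting finiteness of the left-hand side. This is consistent with the order of vanishing underlying the rank~$1$ Stark framework of \Cref{conj:stark}.

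With $Z_A(0)=0$ in hand, I would use the functional equation $\Gamma(s)=\Gamma(s+1)/s$ to rewrite
\begin{equation}
\widehat{\zeta}^{c_1,c_2}_{0,q}(iM,s) = (2\pi N(\bb))^{-s}\,\Gamma(s+1)\,\frac{Z_A(s)}{s},
\end{equation}
and let $s\to 0$. The factor $(2\pi N(\bb))^{-s}\to 1$, the factor $\Gamma(s+1)\to\Gamma(1)=1$, and, because $Z_A(0)=0$, the difference quotient $Z_A(s)/s\to Z_A'(0)$. The left-hand side tends to $\widehat{\zeta}^{c_1,c_2}_{0,q}(iM,0)$ by continuity (entireness from \Cref{thm:zetafun}), and equating the two limits yields $Z_A'(0)=\widehat{\zeta}^{c_1,c_2}_{0,q}(iM,0)$, as claimed. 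The only genuinely delicate point is the justification of $Z_A(0)=0$; everything else is the cancellation of the $\Gamma$-pole and an elementary limit.
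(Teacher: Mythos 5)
Your argument is correct and is exactly what the paper's one-line proof ("take the limit of the specialisation identity as $s\to 0$") intends: the entireness of $\widehat{\zeta}^{c_1,c_2}_{0,q}(iM,s)$ forces $Z_A(0)=0$, and then $\Gamma(s)Z_A(s)=\Gamma(s+1)Z_A(s)/s\to Z_A'(0)$. You have simply supplied the details the paper leaves implicit; the approach is the same.
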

\begin{proof}
Take the limit of \cref{eq:special} as $s \to 0$.
\end{proof}
We give an example of such a computation in \Cref{sec:example}.

\section{Proof of the Kronecker limit formulas}\label{sec:indefinite}

The method of proof is to compute the Fourier series in $\xi$ for an indefinite theta function with respect to an action by a one-parameter unipotent subgroup $\{T^\xi\}$ of $\SL_2(\R)$, then take a Mellin transform and specialise variables. After taking the Mellin transform, we must allow $\xi$ to be a complex parameter and perform a fairly delicate contour integration. Unlike in the definite case, the Fourier coefficients of the indefinite theta are not elementary functions, which ultimately leads to a more complicated Kronecker limit formula.

We fix the following notation for this section. Let $c_1, c_2 \in \C^2$ satisfying $\ol{c_j}^\top M c_j < 0$, and consider the indefinite theta $\Theta_{p,q}^{c_1,c_2}$ with characteristics $p,q \in \R^2$, as defined in \Cref{def:indefchar}. 
Let $t>0$, $\Omega \in \HH_2^{(1)}$, and $M=\im(\Omega)$. Write the indefinite theta of $t\Omega$ as
\begin{align}
\Theta_{p,q}^{c_1,c_2}(t\Omega) &= \sum_{n \in \Z^2} \rho_{\im(t\Omega)}^{c_1,c_2}\!\left(n+q\right)e\!\left(Q_\Omega(n+q)t+p^\top(n+q)\right) \\
&= \sum_{n \in \Z^2} \rho_{M}^{c_1,c_2}\!\left((n+q)t^{1/2}\right)e\!\left(Q_\Omega(n+q)t+p^\top(n+q)\right),
\end{align}
where
\begin{equation}
\rho_{M}^{c_1,c_2}(v) := \eE\!\left(\frac{c_2^\top M v}{\sqrt{-\foh c_2^\top M c_2}}\right) - \eE\!\left(\frac{c_1^\top M v}{\sqrt{-\foh c_1^\top M c_1}}\right),
\end{equation}
and
\begin{equation}
\eE(z) := \int_0^z e^{-\pi u^2}\,du.
\end{equation}

\subsection{Some lemmas about the Siegel upper half-space}

The statement of our Kronecker limit formula, \Cref{thm:KLF30}, involves a matrix $\Lambda_\Omega^c$ in the Siegel upper half-space $\HH_2^{(0)}$. Its proof will require a few basic lemmas about $\HH_2^{(0)}$.
\begin{lem}\label{lem:toomuch}
Let $\Omega = \smmattwo{\o_{11}}{\o_{12}}{\o_{12}}{\o_{22}} \in \HH_2^{(0)}$. Then 
\begin{equation}\label{eq:toomuch}
\im\!\left(\frac{-1}{\o_{11}}\right)\im\!\left(\frac{\det\Omega}{\o_{11}}\right) > \left(\im\!\left(\frac{\o_{12}}{\o_{11}}\right)\right)^2.
\end{equation}
\end{lem}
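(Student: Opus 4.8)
The plan is to read the asserted inequality as the statement that an auxiliary symmetric matrix lies in $\HH_2^{(0)}$. First note $\omega_{11}\neq 0$: since $\Omega\in\HH_2^{(0)}$ the matrix $\im(\Omega)$ is positive definite, so its $(1,1)$-entry $\im(\omega_{11})$ is strictly positive. Hence
\[
\Omega' := \begin{pmatrix} -1/\omega_{11} & \omega_{12}/\omega_{11} \\ \omega_{12}/\omega_{11} & \det(\Omega)/\omega_{11} \end{pmatrix}
\]
is well defined and symmetric, and the three imaginary parts occurring in \cref{eq:toomuch} are exactly the entries of $\im(\Omega')$. For a real symmetric $2\times2$ matrix, positive definiteness is equivalent to positivity of the $(1,1)$-entry together with positivity of the determinant; since $\im(-1/\omega_{11}) = \im(\omega_{11})/|\omega_{11}|^2 > 0$ holds automatically, the inequality \cref{eq:toomuch} is precisely the assertion $\det(\im(\Omega'))>0$, i.e.\ that $\im(\Omega')$ is positive definite, i.e.\ that $\Omega'\in\HH_2^{(0)}$.

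So it suffices to prove $\Omega'\in\HH_2^{(0)}$, and I would do this by exhibiting $\Omega'$ as the image of $\Omega$ under the fractional linear action of an explicit $\gamma\in\Sp_4(\R)$ effecting a ``partial inversion'' in the first coordinate. Concretely I would take $\gamma = \begin{pmatrix} A & B \\ C & D\end{pmatrix}$ with $A = D = \begin{pmatrix}0&0\\0&1\end{pmatrix}$, $B = \begin{pmatrix}-1&0\\0&0\end{pmatrix}$, and $C = \begin{pmatrix}1&0\\0&0\end{pmatrix}$, verify the symplectic relations $A^\top C = C^\top A$, $B^\top D = D^\top B$, $A^\top D - C^\top B = I$ (all immediate), and compute $\gamma\cdot\Omega = (A\Omega+B)(C\Omega+D)^{-1}$. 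Here $C\Omega+D = \begin{pmatrix}\omega_{11}&\omega_{12}\\0&1\end{pmatrix}$ has determinant $\omega_{11}\neq 0$ and is therefore invertible, and multiplying out gives $\gamma\cdot\Omega = \Omega'$, the $(2,2)$-entry simplifying via $\omega_{22} - \omega_{12}^2/\omega_{11} = \det(\Omega)/\omega_{11}$.

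Since $\HH_2^{(0)}$ is an orbit of the $\Sp_4(\R)$-action by fractional linear transformations, and hence stable under it (as recalled just after the definition of the intermediate half-spaces), it follows that $\Omega' = \gamma\cdot\Omega \in \HH_2^{(0)}$, so $\im(\Omega')$ is positive definite and $\det(\im(\Omega'))>0$, which is exactly \cref{eq:toomuch}.

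I expect the only step requiring genuine thought to be guessing the correct symplectic matrix $\gamma$; once it is written down, checking $\gamma\in\Sp_4(\R)$ and $\gamma\cdot\Omega=\Omega'$ is routine $2\times2$ algebra. As an independent check — and as a self-contained variant that does not invoke the group action — one can instead expand each entry $\omega_{jk}=n_{jk}+im_{jk}$ and verify directly the clean identity
\[
\im\!\left(\tfrac{-1}{\omega_{11}}\right)\im\!\left(\tfrac{\det\Omega}{\omega_{11}}\right) - \left(\im\!\left(\tfrac{\omega_{12}}{\omega_{11}}\right)\right)^2 = \frac{\det(\im(\Omega))}{|\omega_{11}|^2},
\]
whose right-hand side is positive precisely because $\im(\Omega)$ is positive definite. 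This route trades the conceptual input for a longer but entirely mechanical computation.
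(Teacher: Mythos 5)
Your argument is correct, but its main route is genuinely different from the paper's. The paper proves the lemma by exactly the ``mechanical'' computation you mention only as an independent check at the end: it writes $\Omega = N + iM$, notes $\omega_{11}\neq 0$, and verifies the identity
\begin{equation*}
\im\!\left(\frac{-1}{\omega_{11}}\right)\im\!\left(\frac{\det\Omega}{\omega_{11}}\right) - \left(\im\!\left(\frac{\omega_{12}}{\omega_{11}}\right)\right)^2 = \frac{m_{11}m_{22}-m_{12}^2}{n_{11}^2+m_{11}^2} = \frac{\det(\im\Omega)}{|\omega_{11}|^2},
\end{equation*}
whose right-hand side is positive because $\det(\im\Omega)>0$; this matches your stated identity exactly. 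Your primary argument instead reads the inequality as $\det(\im\Omega')>0$ for the auxiliary matrix $\Omega'$, exhibits $\Omega'=\gamma\cdot\Omega$ for an explicit partial inversion $\gamma\in\Sp_4(\R)$ (your symplectic relations and the product $(A\Omega+B)(C\Omega+D)^{-1}=\Omega'$ both check out, as does the invertibility of $C\Omega+D$, which is needed for the action to be defined), and invokes the stability of $\HH_2^{(0)}$ under the $\Sp_4(\R)$-action, which the paper asserts when introducing the intermediate half-spaces. Both proofs are complete. The trade-off: your symplectic argument explains conceptually where the inequality comes from and would generalize to block partial inversions in higher genus, at the cost of leaning on the (standard, but not proved in this paper) fact that fractional linear transformations preserve the Siegel upper half-space; the paper's computation is shorter and entirely self-contained. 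Note also that the determinant identity above is precisely what the transformation law for $\im(\gamma\cdot\Omega)$ predicts, since $\det(C\Omega+D)=\omega_{11}$, so the two approaches are consistent in a satisfying way.
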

\begin{proof}
Express $\Omega$ in terms of its real and imaginary parts,
\begin{equation}
\mattwo{\o_{11}}{\o_{12}}{\o_{12}}{\o_{22}} = \mattwo{n_{11}}{n_{12}}{n_{12}}{n_{22}} + i \mattwo{m_{11}}{m_{12}}{m_{12}}{m_{22}}.
\end{equation}
Note that $m_{11} \neq 0$ because $m_{11} m_{22}-m_{12}^2=\det M > 0$, and thus $\o_{11} \neq 0$. By an algebraic calculation,
\begin{equation}
\im\!\left(\frac{-1}{\o_{11}}\right)\im\!\left(\frac{\det\Omega}{\o_{11}}\right) - \left(\im\!\left(\frac{\o_{12}}{\o_{11}}\right)\right)^2 = \frac{m_{11}m_{22}-m_{12}^2}{n_{11}^2+m_{11}^2}.
\end{equation}
Now, $m_{11}m_{22}-m_{12}^2 = \det M$ is positive, and so is $n_{11}^2+m_{11}^2$. Thus, the inequality \cref{eq:toomuch} holds.
\end{proof}
We will also need the following inequality.
\begin{lem}\label{lem:roots}
Let $\Omega = \smmattwo{\o_{11}}{\o_{12}}{\o_{12}}{\o_{22}} \in \HH_2^{(0)}$. The two roots of $Q_\Omega\smcoltwo{z}{1} = 0$ are $\t_1 = \frac{-\o_{12}+\sqrt{\det(-i\Omega)}}{\o_{11}}$ and $\t_2 = \frac{-\o_{12}-\sqrt{\det(-i\Omega)}}{\o_{11}}$. Then, $\im(\t_1) > 0 > \im(\t_2)$.
\end{lem}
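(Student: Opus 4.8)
The plan is to reduce the claim to a single inequality for the product $\im(\t_1)\,\im(\t_2)$ of the imaginary parts of the two roots, and to supply that inequality from \Cref{lem:toomuch}, which was evidently prepared for exactly this step. First I would confirm the stated formula for the roots: writing $Q_\Omega\smcoltwo{z}{1} = \foh(\o_{11}z^2 + 2\o_{12}z + \o_{22})$, the discriminant of the bracketed quadratic is $4(\o_{12}^2 - \o_{11}\o_{22}) = -4\det\Omega = 4\det(-i\Omega)$, and $\o_{11} \neq 0$ since $\im(\o_{11}) = m_{11} > 0$, so the quadratic formula yields exactly $\t_{1,2} = (-\o_{12} \pm \sqrt{\det(-i\Omega)})/\o_{11}$. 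Setting $w := \sqrt{\det(-i\Omega)}$, $A := \im(\o_{12}/\o_{11})$, and $B := \im(w/\o_{11})$, so that $\im(\t_1) = -A+B$ and $\im(\t_2) = -A-B$, this gives
\[
\im(\t_1)\,\im(\t_2) = A^2 - B^2 .
\]

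The heart of the argument is to prove $B^2 > A^2$, which makes this product strictly negative; in particular both roots are then automatically non-real and lie in opposite half-planes. To bound $B^2$ I would apply the elementary identity $(\im\zeta)^2 = \foh(|\zeta|^2 - \re(\zeta^2))$ with $\zeta = w/\o_{11}$, using $w^2 = \det(-i\Omega) = -\det\Omega$ and $|w|^2 = |\det\Omega|$. A short computation then rearranges into
\[
B^2 = \im\!\left(\frac{-1}{\o_{11}}\right)\im\!\left(\frac{\det\Omega}{\o_{11}}\right) + \frac{1}{2|\o_{11}|^{2}}\bigl(|\det\Omega| + \re(\det\Omega)\bigr),
\]
in which the last term is nonnegative because $|\det\Omega| \ge -\re(\det\Omega)$. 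Since \Cref{lem:toomuch} asserts $\im(-1/\o_{11})\,\im(\det\Omega/\o_{11}) > (\im(\o_{12}/\o_{11}))^2 = A^2$, I conclude $B^2 > A^2$ and hence $\im(\t_1)\,\im(\t_2) < 0$.

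It remains to decide which root is the upper one. Because $B^2 > A^2$, the numbers $\im(\t_1) = -A+B$ and $\im(\t_2) = -A-B$ carry the signs of $B$ and of $-B$ respectively, so $\t_1 \in \HH$ precisely when $B = \im(\sqrt{\det(-i\Omega)}/\o_{11}) > 0$. I would settle this sign by continuity: $\det(-i\Omega)$ is nonvanishing on the connected domain $\HH_2^{(0)}$, so the branch of $\sqrt{\det(-i\Omega)}$ fixed by the conventions of \cite{kopp1} is continuous there, $B$ vanishes nowhere (as $\im(\t_1)\im(\t_2) < 0$ throughout), and therefore the sign of $B$ is constant on $\HH_2^{(0)}$; evaluating at a single convenient base point then identifies it and yields the labelling $\im(\t_1) > 0 > \im(\t_2)$.

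I expect the main obstacle to be bookkeeping rather than anything conceptual. The delicate points are extracting the lower bound for $B^2$ so that the surplus beyond the quantity controlled by \Cref{lem:toomuch} is manifestly nonnegative, and correctly tracking the square-root branch of \cite{kopp1} in the final labelling step, which is where a sign error would be easiest to make.
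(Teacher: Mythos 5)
Your proposal is correct and follows essentially the same route as the paper: reduce to $\im(\t_1)\im(\t_2)<0$ by showing $\bigl(\im(\sqrt{\det(-i\Omega)}/\o_{11})\bigr)^2 > \bigl(\im(\o_{12}/\o_{11})\bigr)^2$, where the lower bound $\im(-1/\o_{11})\im(\det\Omega/\o_{11})$ comes from \Cref{lem:toomuch} and the surplus is manifestly nonnegative (the paper packages your identity $(\im\zeta)^2=\foh(|\zeta|^2-\re(\zeta^2))$ as the equivalent inequality $(\im(\a\b))^2\ge\im(\a^2)\im(\b^2)$ with $\a=1/\sqrt{-\o_{11}}$, $\b=\sqrt{\det(-i\Omega)}/\sqrt{-\o_{11}}$), and then fix the labelling by connectedness of $\HH_2^{(0)}$ with the base point $\Omega=iI_2$, where $\t_1=i$, $\t_2=-i$.
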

\begin{proof}
We have $Q_\Omega\smcoltwo{z}{1} = \o_{11}z^2 + 2\o_{12}z + \o_{22}$, and the expressions for the roots come from the quadratic formula.

For any complex numbers $\a = a_1+ia_2$ and $\b=b_1+ib_2$, $\left(\im(\a\b)\right)^2 - \im(\a^2)\im(\b^2) = (a_1b_2-a_2b_1)^2 \geq 0$. Thus, $\left(\im(\a\b)\right)^2 \geq \im(\a^2)\im(\b^2)$.

In particular, taking $\a = \frac{1}{\sqrt{-\o_{11}}}$ and $\b = \frac{\sqrt{\det(-i\Omega)}}{\sqrt{-\o_{11}}}$ (for any choice of $\sqrt{-\o_{11}}$), we obtain the inequality 
\begin{align}
\left(\im\!\left(\frac{\sqrt{\det(-i\Omega)}}{\o_{11}}\right)\right)^2 &\geq \im\!\left(\frac{-1}{\o_{11}}\right)\im\!\left(\frac{\det(-i\Omega)}{-\o_{11}}\right) \\ &= \im\!\left(\frac{-1}{\o_{11}}\right)\im\!\left(\frac{\det(\Omega)}{\o_{11}}\right).
\end{align}
Appealing to \Cref{lem:toomuch}, we see by transitivity that
\begin{equation}
\left(\im\!\left(\frac{\sqrt{\det(-i\Omega)}}{\o_{11}}\right)\right)^2 > \left(\im\!\left(\frac{\o_{12}}{\o_{11}}\right)\right)^2.
\end{equation}
By subtracting the left-hand side and factoring, this inequality may be rewritten as $0 > \im(\tau_1)\im(\tau_2)$. So $\im(\tau_1)$ and $\im(\tau_2)$ are always nonzero real numbers with opposite signs. In the special case $\Omega = \smmattwo{i}{0}{0}{i}$, $\tau_1 = i$ and $\tau_2 = -i$. Since $\HH_2^{(0)}$ is connected, we always have $\im(\t_1) > 0 > \im(\t_2)$.
\end{proof}

\subsection{Some integrals involving $\eE(u)$}

We will now prove a few integral formulas that we will need.

\begin{lem}\label{lem:eEeint}
Suppose 
that $\alpha, \beta \in \C$ satisfy $\re\left(\alpha^2 - 2i\beta\right)>0$. 
Then, using the standard branch of the square root function,
\begin{equation}
\int_{0}^\infty \eE(\a t^{1/2})e(\b t)\,dt = \frac{-\alpha}{4\pi i\b\sqrt{\alpha^2-2 i\b}}.\label{eq:mop2}
\end{equation}
\end{lem}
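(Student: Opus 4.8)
The plan is to integrate by parts, trading the transcendental factor $\eE(\alpha t^{1/2})$ for its elementary derivative. Since $\eE'(w) = e^{-\pi w^2}$, the chain rule gives $\frac{d}{dt}\eE(\alpha t^{1/2}) = \frac{\alpha}{2}t^{-1/2}e^{-\pi\alpha^2 t}$, while an antiderivative of $e(\beta t)$ is $\frac{e(\beta t)}{2\pi i\beta}$. Integration by parts therefore yields
\[
\int_0^\infty \eE(\alpha t^{1/2})e(\beta t)\,dt = \left[\eE(\alpha t^{1/2})\frac{e(\beta t)}{2\pi i\beta}\right]_0^\infty - \frac{\alpha}{4\pi i\beta}\int_0^\infty t^{-1/2}e^{-\pi(\alpha^2-2i\beta)t}\,dt.
\]

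Next I would dispatch the boundary term and evaluate the remaining integral. At $t=0$ the bracket vanishes because $\eE(0)=0$; at $t=\infty$ it vanishes because the hypothesis $\re(\alpha^2-2i\beta)>0$ forces exponential decay (see the obstacle below). The surviving integral is a standard Gamma integral: writing $A = \pi(\alpha^2-2i\beta)$, which has positive real part, a rotation of contour to the positive real axis gives $\int_0^\infty t^{-1/2}e^{-At}\,dt = \Gamma(\tfrac12)A^{-1/2} = \sqrt{\pi}\,(\pi(\alpha^2-2i\beta))^{-1/2} = (\alpha^2-2i\beta)^{-1/2}$, where the standard branch is unambiguous precisely because $\alpha^2-2i\beta$ lies in the right half-plane. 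Substituting back reproduces the claimed formula $\frac{-\alpha}{4\pi i\beta\sqrt{\alpha^2-2i\beta}}$ directly, with no constant of integration to fix.

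The main obstacle is justifying convergence and the vanishing of the boundary term at infinity, both of which rest on the large-argument asymptotic $\eE(w) \sim \frac{e^{-\pi w^2}}{-2\pi w}$, valid as $|w|\to\infty$ with $\re(w^2)<0$. Applying this with $w = \alpha t^{1/2}$ shows that $\eE(\alpha t^{1/2})e(\beta t)$ decays like $t^{-1/2}e^{-\pi\re(\alpha^2-2i\beta)t}$, which is integrable on $(0,\infty)$ (the $t^{1/2}$ behaviour near $0$ causes no trouble) and makes the endpoint term at $\infty$ vanish; when $\re(\alpha^2)\ge 0$ the factor $\eE(\alpha t^{1/2})$ is merely bounded and the same conclusion is immediate. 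Care must be taken to invoke this asymptotic uniformly enough to license the manipulations, and to confirm the branch of the square root stays principal throughout.

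As an alternative that sidesteps the $\eE$-asymptotics in the boundary analysis, I could instead differentiate under the integral sign in $\alpha$: since $\partial_\alpha \eE(\alpha t^{1/2}) = t^{1/2}e^{-\pi\alpha^2 t}$, one obtains $\partial_\alpha I = \frac{1}{2\pi}(\alpha^2-2i\beta)^{-3/2}$ from a $\Gamma(\tfrac32)$ integral, and integrating back in $\alpha$ — using the identity $\frac{d}{d\alpha}\frac{\alpha}{\sqrt{\alpha^2-2i\beta}} = \frac{-2i\beta}{(\alpha^2-2i\beta)^{3/2}}$ — recovers the formula once the constant is pinned by $I=0$ at $\alpha=0$. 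Here the differentiated integrand is elementary and easy to dominate, so only convergence of $I$ itself invokes the asymptotic; and since the domain $\{\re(\alpha^2-2i\beta)>0\}$ is connected, analytic continuation extends the identity from the slice $\im(\beta)>0$ (where $\alpha=0$ is admissible) to all admissible $(\alpha,\beta)$.
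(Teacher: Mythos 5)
Your proof follows essentially the same route as the paper's: integrate by parts against $e(\beta t)$, discard the boundary term, and evaluate the surviving integral $\int_0^\infty t^{-1/2}e^{-\pi(\alpha^2-2i\beta)t}\,dt$ as a $\Gamma(\tfrac12)$ integral by rotating the ray contour onto the positive real axis, which is justified because $\alpha^2-2i\beta$ lies in the right half-plane. The only real difference is that you make explicit the vanishing of the boundary term at $t=\infty$ via the large-argument asymptotics of $\eE$, a point the paper's proof leaves implicit.
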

\begin{proof}
By integration by parts,
\begin{align}
\int_{0}^\infty \eE(\a t^{1/2})e(\b t)\,dt 
&= \frac{1}{2\pi i \b} \int_{0}^\infty \eE(\a t^{1/2})\,\frac{d\left(e(\b t)\right)}{dt}\,dt\\
&= \frac{1}{2\pi i \b} \left(\left.\eE(\a t^{1/2})e(\b t)\right|_{t=0}^\infty - \int_0^\infty e^{-\pi \a^2 t}\frac{\a}{2} t^{-1/2}e(\b t)\,dt\right)\\
&= \frac{-\a}{4\pi i\b}\int_0^\infty \exp\left(-\left(\pi\a-2\pi i\b\right)t\right)t^{1/2}\,\frac{dt}{t}\\
&= \frac{-\a}{4\pi i\b}\int_{C} \exp(-u)\left(\frac{u}{\pi \a^2 - 2\pi i\b}\right)^{1/2}\,\frac{du}{u}\\
&= \frac{-\a}{4\pi^{3/2} i\b\sqrt{\a^2-2i\b}}\int_{C} e^{-u}u^{1/2}\,\frac{du}{u},\label{eq:mop1}
\end{align}
where the contour $C$ is a ray from the origin through the point $\a^2-2i\b$. If $z \in \C$ with $x=\re(z)>0$, $s \in \C$ with $\s=\re(s)>0$, and $[z_1,z_2]$ denotes the oriented line segment from $z_1$ to $z_2$, then
\begin{align}
\lim_{N \to \infty} \int_{[0,Nz]} e^{-u}u^{s}\,\frac{du}{u}
&= \lim_{N \to \infty} \left(\int_{[0,Nx]} e^{-u}u^{s}\,\frac{du}{u} + \int_{[Nx,Nz]} e^{-u}u^{s}\,\frac{du}{u}\right) \\
&= \Gamma(s) + \lim_{N \to \infty} \int_{[Nx,Nz]} e^{-u}u^{s}\,\frac{du}{u} \\
&= \Gamma(s) + \lim_{N \to \infty} O\left(e^{-Nx}N^\s\right) \\
&= \Gamma(s).
\end{align}
Thus, in particular, $\int_{C} e^{-u}u^{1/2}\,\frac{du}{u} = \Gamma\left(\foh\right) = \pi^{1/2}$. Plugging this into \cref{eq:mop1} gives \cref{eq:mop2}.
\end{proof}

As usual, let $M = \im(\Omega)$.
Define the following auxiliary function, which will appear as a factor in the integral in the indefinite Kronecker limit formula.
\begin{defn}
For $v \in \C^2$ and $s \in \C$, set
\begin{equation}
\k_{\Omega}^{c}(v,s) := -\int_0^\infty \rho_{M}^{c}\left(vt^{1/2}\right) e\left(Q_\Omega(v)t\right) t^s\,\frac{dt}{t}.
\end{equation}
Also, set
\begin{align}
\k_{\Omega}^{c_1,c_2}(v,s) &:= \k_{\Omega}^{c_2}(v,s) - \k_{\Omega}^{c_1}(v,s) \\
&= \int_0^\infty \rho_{M}^{c_1,c_2}\left(vt^{1/2}\right) e\left(Q_\Omega(v)t\right) t^s\,\frac{dt}{t}.
\end{align}
In the case $s=1$, we will leave out $s$ and set $\k_{\Omega}^{c}(v) := \k_{\Omega}^{c}(v,1)$, $\k_{\Omega}^{c_1,c_2}(v) := \k_{\Omega}^{c_1,c_2}(v,1)$.
\end{defn}

We have the following formula for $\k_{\Lambda}^{c}(v)$ for a particular $\Lambda$ that will come up in the calculation.
\begin{cor}\label{cor:k1}
Let $\Lambda_\Omega^c := \Omega - \frac{i}{Q_M(c)}Mcc^\top M$. Note that $\Lambda_\Omega^c \in\HH_2^{(0)}$ by 
Lemma 3.6 of \cite{kopp1}.
Then,
\begin{equation}
\k_{\Omega}^{c}(v) = \frac{c^\top Mv}{4\pi i \sqrt{-Q_M(c)} Q_\Omega(v)\sqrt{-2iQ_{\Lambda_\Omega^c}(v)}}.
\end{equation}
\end{cor}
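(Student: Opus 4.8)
The plan is to evaluate the defining integral for $\k_\Omega^c(v) = \k_\Omega^c(v,1)$ directly, reducing it to a single application of \Cref{lem:eEeint}. Setting $s=1$ collapses the measure $t^s\,\frac{dt}{t}$ to $dt$, so that
\begin{equation}
\k_\Omega^c(v) = -\int_0^\infty \eE\!\left(\alpha\, t^{1/2}\right)e(\beta t)\,dt, \qquad \alpha := \frac{c^\top M v}{\sqrt{-\foh c^\top M c}}, \quad \beta := Q_\Omega(v),
\end{equation}
which is exactly the shape of the integral in \Cref{lem:eEeint}. Applying that lemma and absorbing the outer minus sign immediately yields $\k_\Omega^c(v) = \frac{\alpha}{4\pi i \beta \sqrt{\alpha^2 - 2i\beta}}$.

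The crux of the argument is then the purely algebraic identity $\alpha^2 - 2i\beta = -2i\,Q_{\Lambda_\Omega^c}(v)$. To verify it I would expand
\begin{equation}
Q_{\Lambda_\Omega^c}(v) = \foh v^\top\Omega v - \frac{i}{2Q_M(c)}\, v^\top M c\, c^\top M v = Q_\Omega(v) - \frac{i}{2Q_M(c)}\left(c^\top M v\right)^2
\end{equation}
directly from the definition $\Lambda_\Omega^c = \Omega - \frac{i}{Q_M(c)}Mcc^\top M$, using the symmetry of $M$ to write $v^\top M c\, c^\top M v = (c^\top M v)^2$. Comparing with $\alpha^2 = -\frac{(c^\top Mv)^2}{Q_M(c)}$ and $2i\beta = 2iQ_\Omega(v)$ shows the two sides agree. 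Substituting this identity together with $\sqrt{-\foh c^\top M c} = \sqrt{-Q_M(c)}$ back into the expression for $\k_\Omega^c(v)$ reproduces the claimed formula verbatim.

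The step that requires the most care is checking that the hypothesis of \Cref{lem:eEeint}, namely $\re(\alpha^2 - 2i\beta) > 0$, actually holds, since this is what simultaneously licenses the convergence of the integral and the use of the standard branch of the square root. By the identity above this condition reads $\re\!\left(-2i\,Q_{\Lambda_\Omega^c}(v)\right) = \im\!\left(v^\top \Lambda_\Omega^c v\right) > 0$. Here I would invoke the cited fact (Lemma 3.6 of \cite{kopp1}) that $\Lambda_\Omega^c \in \HH_2^{(0)}$, so that $\im(\Lambda_\Omega^c)$ is positive definite; for real $v$ this gives $\im\!\left(v^\top\Lambda_\Omega^c v\right) = v^\top \im(\Lambda_\Omega^c)\, v > 0$ at once. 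Because $-2iQ_{\Lambda_\Omega^c}(v)$ then lies in the open right half-plane, the standard branch of the square root is unambiguous and agrees with $\sqrt{-2iQ_{\Lambda_\Omega^c}(v)}$ as written in the statement, so no separate branch reconciliation is needed; the more general complex values of $v$ that appear later in the Kronecker limit formula are then covered by analytic continuation in $v$, both sides being holomorphic wherever the integral converges.
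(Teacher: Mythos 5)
Your proof is correct and takes essentially the same route as the paper: the paper's own proof of this corollary is the single line ``Follows from \Cref{lem:eEeint},'' and you have simply supplied the omitted details --- the identification $\alpha = c^\top M v/\sqrt{-Q_M(c)}$, $\beta = Q_\Omega(v)$, the algebraic identity $\alpha^2 - 2i\beta = -2iQ_{\Lambda_\Omega^c}(v)$, and the verification of the hypothesis $\re(\alpha^2-2i\beta)>0$ via $\Lambda_\Omega^c \in \HH_2^{(0)}$. All of these check out.
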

\begin{proof}
Follows from \Cref{lem:eEeint}. 
\end{proof}

The following lemma will be needed to evaluate certain integrals.
\begin{lem}
For any real number $\alpha \in \R$, 
\begin{equation}
\int_0^\infty \rho_{M}^{c_1,c_2}\!\left(v \alpha t^{1/2}\right) e\!\left(Q_\Omega(v)\alpha^2t\right) t^s\,\frac{dt}{t} = -\frac{\sgn(\alpha)}{|\alpha|^{2s}}\k_{\Omega}^{c_1,c_2}(v,s).
\end{equation}
\end{lem}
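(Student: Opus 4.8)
The plan is to prove the identity by a single change of variables, $u = \alpha^2 t$, which rescales the integral so that the dependence on $\alpha$ can be read off factor by factor. Before substituting, I would dispose of the degenerate case $\alpha = 0$: since $\rho_M^{c_1,c_2}(0) = \eE(0) - \eE(0) = 0$, the left-hand integrand vanishes identically and both sides are $0$, so I may assume $\alpha \neq 0$ throughout.

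With $\alpha \neq 0$, I would carry out the substitution $u = \alpha^2 t$. Because $\alpha^2 > 0$, this preserves the range $(0,\infty)$ and the scale-invariant measure, $\frac{dt}{t} = \frac{du}{u}$. Tracking the three factors of the integrand: the exponential becomes $e(Q_\Omega(v)\alpha^2 t) = e(Q_\Omega(v) u)$; the power becomes $t^s = u^s/|\alpha|^{2s}$, using $\alpha^2 = |\alpha|^2$ (so there is in fact no $\alpha^{2s}$ versus $|\alpha|^{2s}$ ambiguity); and, most importantly, the argument of $\rho_M^{c_1,c_2}$ becomes $v\alpha t^{1/2} = v\alpha\,u^{1/2}/|\alpha| = \sgn(\alpha)\,v u^{1/2}$, since $t^{1/2} = u^{1/2}/|\alpha|$.

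The key step is then to convert the leftover $\sgn(\alpha)$ inside $\rho_M^{c_1,c_2}$ into a scalar prefactor. Each summand of $\rho_M^{c_1,c_2}$ is $\eE$ evaluated at a linear function of its vector argument, and $\eE$ is odd (immediate from the substitution $u \mapsto -u$ in its defining integral of the even integrand $e^{-\pi u^2}$). Hence $\rho_M^{c_1,c_2}(\sgn(\alpha)\,w) = \sgn(\alpha)\,\rho_M^{c_1,c_2}(w)$ for every $w \in \C^2$, and applying this with $w = v u^{1/2}$ pulls $\sgn(\alpha)$ out of the integral. What remains is $\frac{\sgn(\alpha)}{|\alpha|^{2s}}\int_0^\infty \rho_M^{c_1,c_2}(v u^{1/2})\,e(Q_\Omega(v)u)\,u^s\,\frac{du}{u}$, which by the definition $\k_\Omega^{c_1,c_2}(v,s) = \k_\Omega^{c_2}(v,s) - \k_\Omega^{c_1}(v,s)$ (each $\k_\Omega^{c}$ carrying the overall minus sign in its own definition) equals $-\k_\Omega^{c_1,c_2}(v,s)$. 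This produces exactly the claimed $-\frac{\sgn(\alpha)}{|\alpha|^{2s}}\k_\Omega^{c_1,c_2}(v,s)$.

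The main obstacle—indeed the only subtle point—is the bookkeeping of the sign of $\alpha$ through the square root: for $\alpha < 0$ one must write $t^{1/2} = u^{1/2}/|\alpha|$ rather than $u^{1/2}/\alpha$, so that a genuine factor of $\sgn(\alpha)$ appears and is then linearized away by the oddness of $\eE$. All convergence questions are inherited from the integral defining $\k_\Omega^{c_1,c_2}(v,s)$, since the substitution is a bijection of $(0,\infty)$; no separate analytic estimate is required.
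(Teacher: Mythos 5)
Your proof is correct and is exactly the computation the paper elides: the paper's proof is the one-line assertion that the identity ``follows from the definition of $\k_{\Omega}^{c_1,c_2}(v,s)$,'' and your substitution $u=\alpha^2 t$ together with the oddness of $\eE$ (which linearizes the leftover $\sgn(\alpha)$ out of $\rho_M^{c_1,c_2}$) is precisely the intended verification. Your sign bookkeeping also correctly uses the convention $\k_\Omega^{c_1,c_2}=\k_\Omega^{c_2}-\k_\Omega^{c_1}$ with each $\k_\Omega^{c}$ carrying its own minus sign, which is the convention consistent with the lemma as stated.
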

\begin{proof}
Follows from the definition of $\k_{\Omega}^{c_1,c_2}(v,s)$.
\end{proof}

\subsection{Fourier series of a unipotent transform of an indefinite theta function}

Consider the function of $\xi \in \R$ (although $\xi$ will be allowed to be complex later on) and $t \in \R_{\geq 0}$,
\begin{align}
h(\xi,t) &:= \Theta_{\left(T^\xi\right)^\top p, T^{-\xi}q}^{T^{-\xi}c_1,T^{-\xi}c_2}\left(t\left(T^\xi\right)^\top\Omega T^\xi\right) \\
&= \sum_{n \in \Z^2} \rho_{\Omega}^{c_1,c_2}\left(\left(T^\xi n+q\right)t^{1/2}\right) e\left(Q_\Omega(T^\xi n+q)t+p^\top(T^\xi n+q)\right).
\end{align}

Write this function as a Fourier series,
\begin{equation}
h(\xi,t) = \sum_{k=-\infty}^\infty b_k(t)e(k\xi).
\end{equation}
We are ultimately interested in the Mellin transform of this function,
\begin{align}
\widehat{\zeta}_{\left(T^\xi\right)^\top p, T^{-\xi}q}^{T^{-\xi}c_1,T^{-\xi}c_2}\left(\left(T^\xi\right)^\top\Omega T^\xi, s\right) &= \int_0^\infty h(\xi,t)t^s\,\frac{dt}{t} \\
&= \sum_{k=-\infty}^\infty \b_k(s)e(k\xi),
\end{align}
where, as we will show,
\begin{equation}
\b_k(s) := \int_0^\infty b_k(t)t^s\,\frac{dt}{t}.
\end{equation}

Express $\Omega = \smmattwo{\o_{11}}{\o_{12}}{\o_{12}}{\o_{22}}$, $n = \smcoltwo{n_1}{n_2}$, $p = \smcoltwo{p_1}{p_2}$, $q = \smcoltwo{q_1}{q_2}$. Write 
\begin{equation}
h(\xi,t) = \sum_{n_2=-\infty}^\infty h_{n_2}(\xi,t) = h_{0}(\xi,t) + \widetilde{h}(\xi,t), 
\end{equation}
where $h_j(\xi,t)$ is the sum over the terms with $n_2=j$, and $\widetilde{h}(\xi,t)$ is the sum over all the terms where $n_2 \neq 0$. 
Also, assume that $q_1=q_2=0$.

First, calculate $h_0(\xi,t)$:
\begin{align}
h_0(\xi,t) 
&= \sum_{n_1=-\infty}^\infty \rho_\Omega^{c_1,c_2}\!\coltwo{n_1t^{1/2}}{0} e\!\left(\foh \o_{11}n_1^2 t + p_1 n_1 \right).
\end{align}
The $n_1 = 0$ term of this sum vanishes.

We write, for $n_2 \neq 0$,
\begin{align}
&\int_0^1 h_{n_2}(\xi,t)e(-k\xi)\,d\xi \nn \\
&= \int_0^1 \sum_{n_1=-\infty}^\infty \rho_{M}^{c_1,c_2}\!\left(\coltwo{n_1+n_2 \xi}{n_2}t^{1/2}\right) \nn \\ &\hspace{65pt} \cdot e\!\left(Q_\Omega\coltwo{n_1+n_2\xi}{n_2}t+p^\top\!\coltwo{n_1+n_2\xi}{n_2}\right)e(-k\xi)\,d\xi \\
&= \sum_{n_1=0}^{n_2-1} \int_{-\infty}^{\infty} \rho_{M}^{c_1,c_2}\!\left(\coltwo{n_1+n_2 \xi}{n_2}t^{1/2}\right)\nn \\ &\hspace{65pt} \cdot e\!\left(Q_\Omega\coltwo{n_1+n_2\xi}{n_2}t+p^\top\!\coltwo{n_1+n_2\xi}{n_2}\right)e(-k\xi)\,d\xi \\
&= \sum_{n_1=0}^{n_2-1} \int_{-\infty}^{\infty} \rho_{M}^{c_1,c_2}\!\left(\coltwo{n_2 \xi}{n_2}t^{1/2}\right)\nn \\ &\hspace{65pt} \cdot e\!\left(Q_\Omega\coltwo{n_2\xi}{n_2}t+p^\top \coltwo{n_2\xi}{n_2}\right)e\!\left(-k\left(\xi-\frac{n_1}{n_2}\right)\right)\,d\xi \\
&= \left(\sum_{n_1=0}^{n_2-1} e\!\left(\frac{kn_1}{n_2}\right)\right) \int_{-\infty}^{\infty} \rho_{M}^{c_1,c_2}\left(\coltwo{\xi}{1}n_2t^{1/2}\right) \nn \\ &\hspace{125pt} \cdot e\!\left(Q_\Omega\coltwo{\xi}{1}n_2^2t+p^\top \coltwo{\xi}{1}n_2\right)e\!\left(-k\xi\right)\,d\xi.
\end{align}
The exponential sum $\ds \sum_{n_1=0}^{n_2-1} e\!\left(\frac{kn_1}{n_2}\right)$ evaluates to $|n_2|$ if $n_2|k$, and to $0$ otherwise.
Thus, for all $k\in \Z$ (including $k=0$),
\begin{align}\label{eq:goat}
&\int_0^1 \widetilde{h}(\xi,t)e(-k\xi)\,d\xi \\
&= \sum_{n_2 | k} \abs{n_2}\int_{-\infty}^{\infty} \rho_{M}^{c_1,c_2}\!\left(\coltwo{\xi}{1}n_2t^{1/2}\right) e\!\left(Q_\Omega\!\coltwo{\xi}{1}n_2^2t+p^\top\!\coltwo{\xi}{1}n_2\right)e\!\left(-k\xi\right)\,d\xi. \nn 
\end{align}
Our convention here is that a sum over $n_2|k$ ranges over \textit{both positive and negative} $n_2$, and over all integers when $k=0$.

\subsection{Shifting the contour vertically}

Fix a positive real number $\l$ to be specified later. Let $C^+$ ($C^-$) be the contour consisting of the horizontal line $\im(z) = \l$ ($\im(z) = -\l$), oriented towards the right half-plane.
For each $d_1,d_2 \in \Z$, $d_2 \neq 0$, let $C(d_1,d_2)$ be $C^+$ if $d_1d_2>0$ or $d_1=0$ and $d_2>0$; let $C(d_1,d_2)$ be $C^-$ if $d_1d_2<0$ or $d_1=0$ and $d_2<0$.
The integrands in \cref{eq:goat} approach zero as $\re(\xi) \to \pm \infty$, so we may rewrite this formula using contour integrals
\begin{align}
&\int_0^1 \widetilde{h}(\xi,t)e(-k\xi)\,d\xi \\
&= \sum_{n_2 | k} \abs{n_2}\int_{C\left(\frac{k}{n_2},n_2\right)} \rho_{M}^{c_1,c_2}\!\left(\coltwo{\xi}{1}n_2t^{\foh}\right) e\!\left(Q_\Omega\coltwo{\xi}{1}n_2^2t+p^\top \coltwo{\xi}{1}n_2\right)e\!\left(-k\xi\right)\,d\xi.\nn
\end{align}

\subsection{Taking Mellin transforms term-by-term}

To calculate the Mellin transform of $h_0(\xi,t)$, we need to check absolute convergence to justify reversing the order of summation/integration. 
\begin{prop} 
If $\s = \re(s) > \foh$, then
\begin{equation}
\int_0^\infty \sum_{n_1=-\infty}^{\infty} \abs{\rho_\Omega^{c_1,c_2}\!\coltwo{n_1t^{1/2}}{0} e\!\left(\foh \o_{11}n_1^2 t + p_1 n_1 \right)}t^\s\,\frac{dt}{t} < \infty.
\end{equation}
\end{prop}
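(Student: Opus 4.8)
The plan is to exploit the non-negativity of the integrand, reduce to a single one-dimensional integral by rescaling, and then control that integral using the asymptotics of $\eE$.

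Since every term is non-negative, Tonelli's theorem lets me interchange the sum and the integral, so it is enough to bound $\sum_{n_1 \neq 0}\int_0^\infty G(n_1,t)\,t^\sigma\,\frac{dt}{t}$, where $G(n_1,t):=\left|\rho_M^{c_1,c_2}\coltwo{n_1 t^{1/2}}{0}\right|e^{-\pi m_{11}n_1^2 t}$ (the $n_1=0$ term vanishes, and $|e(p_1 n_1)|=1$ since $p_1\in\R$). For each fixed $n_1\neq 0$ I would substitute $u=n_1^2 t$; using that $\eE$ is odd, so that $\left|\rho_M^{c_1,c_2}\coltwo{\sgn(n_1)u^{1/2}}{0}\right|=\left|\rho_M^{c_1,c_2}\coltwo{u^{1/2}}{0}\right|$, this gives $\int_0^\infty G(n_1,t)\,t^\sigma\,\frac{dt}{t}=|n_1|^{-2\sigma}\,I$ with $I:=\int_0^\infty\left|\rho_M^{c_1,c_2}\coltwo{u^{1/2}}{0}\right|e^{-\pi m_{11}u}\,u^{\sigma-1}\,du$ independent of $n_1$. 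Thus the whole quantity equals $2\zeta(2\sigma)\,I$, which is finite for $\sigma>\foh$ exactly when $I<\infty$. This isolates the role of the hypothesis: $\sigma>\foh$ is precisely what makes $\sum_{n_1\neq0}|n_1|^{-2\sigma}$ converge, so everything reduces to the single integral $I$.

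It remains to show $I<\infty$. Near $u=0$ the expansion $\eE(\alpha)=\alpha+O(\alpha^3)$ gives $\left|\rho_M^{c_1,c_2}\coltwo{u^{1/2}}{0}\right|=O(u^{1/2})$, so the integrand is $O(u^{\sigma-1/2})$, integrable for $\sigma>\foh$. The content is the tail $u\to\infty$. If $m_{11}=e_1^\top M e_1>0$, where $e_1=\coltwo{1}{0}$, i.e. $e_1$ lies in the positive cone of $M$, then $e^{-\pi m_{11}u}$ decays while $\rho_M^{c_1,c_2}$ stays bounded, and the tail is harmless. The real work is the case $m_{11}\le 0$: here $e^{-\pi m_{11}u}$ does not decay, so all decay must come from $\rho$. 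Writing $A_j:=\frac{c_j^\top M e_1}{\sqrt{-\foh c_j^\top M c_j}}$ and splitting $\eE(A_j u^{1/2})=s_j-r_j(u)$ into its limit $s_j$ and the error-function tail $r_j(u)=O\!\left(u^{-1/2}e^{-\pi\re(A_j^2)u}\right)$, we get $\rho_M^{c_1,c_2}\coltwo{u^{1/2}}{0}=(s_2-s_1)-(r_2(u)-r_1(u))$. Integrability of the tail needs two facts: (i) $s_2-s_1=0$, so there is no non-decaying ``sign-jump'' term; and (ii) $\re(A_j^2)+m_{11}>0$, so the Gaussian tails beat the growth $e^{-\pi m_{11}u}$. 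Fact (i) is exactly the cancellation of sign jumps along the negative cone that makes the full indefinite theta series converge, which holds for the admissible parameters by the convergence result underlying \Cref{thm:zetafun} (see \cite{kopp1}). Fact (ii), in the real case, follows from the Gram identity $\langle e_1,e_1\rangle_M\langle c_j,c_j\rangle_M-\langle e_1,c_j\rangle_M^2=\det M\,(\det[e_1\,|\,c_j])^2\le 0$ for the bilinear form $\langle u,v\rangle_M:=u^\top M v$: since $\det M<0$ this yields $\langle e_1,c_j\rangle_M^2\ge m_{11}\,(c_j^\top M c_j)$ and hence $A_j^2\ge -2m_{11}$, giving the required margin. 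Granting (i) and (ii), the tail of the integrand is $O\!\left(u^{\sigma-3/2}e^{-\pi\delta u}\right)$ for some $\delta>0$, so $I<\infty$.

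The main obstacle is establishing (i) and (ii) when $c_1,c_2\in\C^2$ rather than $\R^2$. For real $c_j$ both are elementary, but for complex $c_j$ the number $A_j^2$ is genuinely complex, the Gram identity becomes a complex equation from which one cannot simply read off a sign, and the limit $s_j$ must be computed along a ray of complex argument. Controlling $\re(A_j^2)+m_{11}$ and the vanishing of $s_2-s_1$ then requires the complex negative-cone estimate rather than a one-line Cauchy--Schwarz; I would import this directly from the convergence analysis of the indefinite theta function in \cite{kopp1}, since it is the very input guaranteeing that $\Theta_{p,q}^{c_1,c_2}$, and hence $\widehat{\zeta}_{p,q}^{c_1,c_2}$, is well defined.
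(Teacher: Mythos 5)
Your proof is correct and follows essentially the same route as the paper's: rescale $t \mapsto t/n_1^2$ in each term to factor the whole expression as $\bigl(\sum_{n_1\neq 0}|n_1|^{-2\sigma}\bigr)\cdot I$ with a single $n_1$-independent integral $I$, the sum converging precisely for $\sigma>\foh$. The paper dismisses $I<\infty$ in a one-line parenthetical (``the integrand approaches a constant at $t\to 0$ and decays exponentially as $t\to\infty$''), so your detailed tail analysis---the vanishing of $s_2-s_1$ and the Gaussian margin $\re(A_j^2)+m_{11}>0$---is just an expanded justification of that same decay claim, resting on the same convergence input from \cite{kopp1}.
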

\begin{proof}
We bound the integral as follows.
\begin{align}
&\int_0^\infty \sum_{n_1=-\infty}^{\infty} \abs{\rho_\Omega^{c_1,c_2}\!\coltwo{n_1t^{1/2}}{0} e\!\left(\foh \o_{11}n_1^2 t + p_1 n_1 \right)}t^\s\,\frac{dt}{t} \\
&= \int_0^\infty \sum_{n_1=-\infty}^{\infty} \abs{\rho_\Omega^{c_1,c_2}\!\coltwo{t^{1/2}}{0} e\!\left(\foh \o_{11} t \right)}\left(\frac{t}{n_1^2}\right)^\s\,\frac{dt}{t} \\
&= \left(\sum_{n_1=-\infty}^{\infty} \abs{n_1}^{-2\s}\right)\left(\int_0^\infty \abs{\rho_\Omega^{c_1,c_2}\coltwo{t^{1/2}}{0} e\!\left(\foh \o_{11} t \right)}t^\s\,\frac{dt}{t}\right) \\
&< \infty.
\end{align}
The sum converges for $\s > \foh$, and the integral converges for $\s > 0$ (because the integrand approaches a constant at $t \to 0$ and decays exponentially as $t \to \infty$).
\end{proof}

Therefore, we can switch the sum and the integral. Using \Cref{lem:eEeint} and dropping the subscript on $n_1$,
\begin{align}
\int_0^\infty h_0(\xi,t)t^s\,\frac{dt}{t} &= -\sum_{n \in \Z \setminus \{0\}} \frac{\sgn(n)e(p_1 n)}{\abs{n}^{2s}}\k_{\Omega}^{c_1,c_2}\!\left(\coltwo{1}{0},s\right) \\
&= -\left(\Li_{2s}(e(p_1))-\Li_{2s}(e(-p_1))\right)\k_{\Omega}^{c_1,c_2}\!\left(\coltwo{1}{0},s\right).
\end{align}

Next, we're going to calculate the Mellin transform of $\widetilde{h}(\xi,t)$. We need an absolute convergence result to justify our calculation here, too.

\begin{prop}\label{prop:bigabs}
Suppose $\s = \re(s) > \foh$. Then,
\begin{align}
\sum_{k \in \Z} \sum_{\substack{n_2 | k \\ n_2 \neq 0}} \int_0^\infty \int_{C\left(\frac{k}{n_2},n_2\right)}&\left|\rho_{M}^{c_1,c_2}\!\left(\coltwo{\xi}{1}n_2t^{1/2}\right)\right.\nn \\& \left. \ \ \cdot e\!\left(Q_\Omega\coltwo{\xi}{1}n_2^2t+p^\top \coltwo{\xi}{1}n_2\right)e\!\left(-k\xi\right)t^s\right|\,\frac{dt}{t}d\xi < \infty.
\end{align}
\end{prop}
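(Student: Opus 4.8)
The plan is to bound the modulus of the integrand by a product of elementary factors, each governing one variable, and then to integrate in $t$, integrate in $x:=\re(\xi)$, and finally sum over the discrete indices $k,n_2$. Parametrising $C(k/n_2,n_2)$ as $\xi=x+iy$ with $y=\pm\lambda$, and writing the arguments of the two error integrals in $\rho_M^{c_1,c_2}$ as $\alpha_j=L_j(\xi)\,n_2t^{1/2}$ with $L_j(\xi):=c_j^\top M\smcoltwo{\xi}{1}\big/\sqrt{-\foh\, c_j^\top Mc_j}$, the modulus of the integrand factors as
\[
\Big|\rho_M^{c_1,c_2}\!\big(\smcoltwo{\xi}{1}n_2t^{1/2}\big)\Big|\cdot e^{-2\pi\,\im(Q_\Omega\smcoltwo{\xi}{1})\,n_2^2t}\cdot e^{-2\pi(p_1n_2-k)\,y}\cdot t^{\s}.
\]
The third factor is the source of summability in $k$: on the real axis it has modulus $1$, so the $k$-sum of the absolute integrand would diverge, which is precisely why the vertical shift is needed. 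The rule defining $C(k/n_2,n_2)$ places $\xi$ on whichever side of $\R$ makes this factor decay, bounding it by $e^{-2\pi|p_1n_2-k|\lambda}$ and giving geometric decay in $|k|$.

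The heart of the proof—and the main obstacle—is the first pair of factors, because $\eE$ grows in the indefinite directions and the Gaussian $e^{-2\pi\im(Q_\Omega\smcoltwo{\xi}{1})n_2^2t}$ can itself grow in $t$ (since $\im(\Omega)=M$ is indefinite). The key point is that one must keep $\rho_M^{c_1,c_2}$ together as a \emph{difference}: the individual Mellin integrals of $\eE(\alpha_j)e(Q_\Omega\smcoltwo{\xi}{1}n_2^2t)$ need not converge absolutely, and only the cancellation between the two error integrals saves the day. Concretely I would use the asymptotics $\eE(\alpha)=\foh\sgn(\re(\alpha))+r(\alpha)$ with $|r(\alpha)|\lesssim e^{-\pi\re(\alpha^2)}/(1+|\alpha|)$ when $\re(\alpha^2)\ge0$, together with the growth bound $|\eE(\alpha)|\lesssim(1+|\alpha|)\,e^{-\pi\re(\alpha^2)}$ when $\re(\alpha^2)\le0$, both from the straight-line integral representation. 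The leading step-function parts cancel in $\rho_M^{c_1,c_2}$ except on the cone where $\re(L_1(\xi))$ and $\re(L_2(\xi))$ have opposite signs; there the hypothesis $\ol{c_j}^\top Mc_j<0$ forces $\im(Q_\Omega\smcoltwo{\xi}{1})>0$, so the Gaussian alone decays. In every case the combined rate is dictated by the identity
\[
L_j(\xi)^2-2i\,Q_\Omega\smcoltwo{\xi}{1}=-2i\,Q_{\Lambda_\Omega^{c_j}}\!\smcoltwo{\xi}{1},
\]
the same computation that proves \Cref{cor:k1}, whose real part is $P_j(\xi):=\im\!\big(\smcoltwo{\xi}{1}^\top\Lambda_\Omega^{c_j}\smcoltwo{\xi}{1}\big)$; this yields $\big|\rho_M^{c_1,c_2}\big|\,e^{-2\pi\im(Q_\Omega\smcoltwo{\xi}{1})n_2^2t}\lesssim(1+|\alpha_j|)\,e^{-\pi P_j(\xi)\,n_2^2t}$.

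The positivity of $P_j$ is where the Siegel geometry enters and where $\lambda$ gets pinned down. Since $\Lambda_\Omega^{c_j}\in\HH_2^{(0)}$ (as recorded in \Cref{cor:k1}), its imaginary part is positive definite, so $P_j(x)=\smcoltwo{x}{1}^\top\im(\Lambda_\Omega^{c_j})\smcoltwo{x}{1}>0$ for real $\xi$ and $P_j(\xi)\sim\im\!\big((\Lambda_\Omega^{c_j})_{11}\big)x^2\to+\infty$. By continuity I would fix $\lambda>0$ small enough, uniformly in $j=1,2$, that $P_j(x\pm i\lambda)\ge\delta(1+x^2)$ for some $\delta>0$; this is the promised specification of $\lambda$.

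With this Gaussian bound the remaining estimates are routine. The $t$-integral is a Gamma integral (as in \Cref{lem:eEeint}, whose convergence condition $\re(\alpha^2-2i\beta)>0$ is exactly $P_j>0$), giving $\int_0^\infty(1+|\alpha_j|)e^{-\pi P_j n_2^2t}t^{\s}\frac{dt}{t}\lesssim|n_2|^{-2\s}\big(P_j(\xi)^{-\s}+|L_j(\xi)|\,P_j(\xi)^{-\s-\foh}\big)$ for $\s>-\foh$. The $x$-integral of this has integrand $\sim|x|^{-2\s}$ as $|x|\to\infty$ and so converges exactly when $\s>\foh$—this is where the hypothesis is used. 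Each term is then $\lesssim|n_2|^{-2\s}e^{-2\pi|k|\lambda}$; summing over $n_2\mid k$ and then $k$, the divisor sums are dominated by $2\zeta(2\s)<\infty$ and the outer sum by $\sum_k e^{-2\pi|k|\lambda}<\infty$, while the $k=0$ slice is controlled by $\sum_{n_2\neq0}|n_2|^{-2\s}$ together with the decay of the factor $e(p^\top\smcoltwo{\xi}{1}n_2)$. The single delicate step is the difference cancellation of the second paragraph; everything downstream is bookkeeping.
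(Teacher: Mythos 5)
Your argument is correct and follows essentially the same route as the paper: the contour rule supplies geometric decay in $k$, the rescaling $t\mapsto t/n_2^2$ extracts the factor $|n_2|^{-2\sigma}$ whose divisor sum is what forces $\sigma>\tfrac12$, and everything reduces to the finiteness of a single double integral over $t$ and the shifted horizontal contour (the quantity the paper calls $K^{\pm}$). The paper simply asserts $K^{\pm}<\infty$ without argument, whereas your second through fourth paragraphs actually prove it — via the $\eE$-asymptotics kept as a difference, the identity $L_j(\xi)^2-2iQ_\Omega\!\left(\begin{smallmatrix}\xi\\1\end{smallmatrix}\right)=-2iQ_{\Lambda_\Omega^{c_j}}\!\left(\begin{smallmatrix}\xi\\1\end{smallmatrix}\right)$, and the positive definiteness of $\im\!\left(\Lambda_\Omega^{c_j}\right)$, which also pins down the otherwise-unspecified $\lambda$ — so your write-up is a faithful, fully detailed version of the paper's proof.
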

\begin{proof}
Let
\begin{align}
K^{\pm} &:= \int_0^\infty \int_{C^\pm}\abs{\rho_{M}^{c_1,c_2}\!\left(\coltwo{\xi}{1}t^{1/2}\right) e\!\left(Q_\Omega\!\coltwo{\xi}{1}t\right)}t^\s\,d\xi\frac{dt}{t} \\
&< \infty. 
\end{align}
Set $K := \max\{K^+, K^-\}$. We have
\begin{align}
&\sum_{k \in \Z} \sum_{\substack{n_2 | k \\ n_2 \neq 0}} \int_0^\infty \int_{C\left(\frac{k}{n_2},n_2\right)}\left|\rho_{M}^{c_1,c_2}\!\left(\coltwo{\xi}{1}n_2t^{1/2}\right) \right. \nn \\ &\hspace{130pt}\left. \cdot e\!\left(Q_\Omega\coltwo{\xi}{1}n_2^2t+p^\top \!\coltwo{\xi}{1}n_2\right)e\!\left(-k\xi\right)t^s\right|\,\frac{dt}{t}d\xi \nn \\
&= \sum_{k \in \Z} \sum_{\substack{n_2 | k \\ n_2 \neq 0}} \int_0^\infty \int_{C\left(\frac{k}{n_2},n_2\right)}\left|\rho_{M}^{c_1,c_2}\!\left(\coltwo{\xi}{1}n_2t^{1/2}\right) e\!\left(Q_\Omega\coltwo{\xi}{1}n_2^2t\right)\right| \nn \\ &\hspace{130pt} \cdot e^{-2\pi\l k}t^\s\,\frac{dt}{t}d\xi \\
&= \sum_{k \in \Z} \sum_{\substack{n_2 | k \\ n_2 \neq 0}} \int_0^\infty \int_{C\left(\frac{k}{n_2},n_2\right)}\abs{\rho_{M}^{c_1,c_2}\!\left(\coltwo{\xi}{1}t^{1/2}\right) e\!\left(Q_\Omega\coltwo{\xi}{1}t\right)} \nn \\ &\hspace{130pt} \cdot e^{-2\pi\l k}\left(\frac{t}{n_2^2}\right)^\s\,\frac{dt}{t}d\xi \\
&\leq K \sum_{k \in \Z} \sum_{\substack{n_2 | k \\ n_2 \neq 0}} e^{-2\pi\l k}n_2^{-2\s} \\
&= K \sum_{d_1 \in \Z} \sum_{d_2 \in \Z \setminus \{0\}} e^{-2\pi\l \abs{d_1d_2}}d_2^{-2\s} \\
&< \infty.
\end{align}
The proposition is proved.
\end{proof}

Now we may justify taking the Mellin transform of the Fourier series term-by-term.
It follows from \Cref{prop:bigabs} that 
\begin{align}
\widehat{\zeta}_{\left(T^\xi\right)^\top p, 0}^{T^{-\xi}c_1,T^{-\xi}c_2}\left(\left(T^\xi\right)^\top\Omega T^\xi, s\right) &= \int_0^\infty h(\xi,t)t^s\,\frac{dt}{t} \\
&= \sum_{k=-\infty}^\infty \b_k(s)e(k\xi),\label{eq:fish}
\end{align}
where $\ds \b_k(s) := \int_0^\infty b_k(t)t^s\,\frac{dt}{t}$.
Define $\ds \widetilde{\b}_k(s) := \int_0^\infty \widetilde{b}_k(t)t^s\,\frac{dt}{t}$; then,
\begin{equation}
\b_k(s) = \left\{\begin{array}{ll}
-\left(\Li_{2s}(e(p_1))-\Li_{2s}(e(-p_1))\right)\k_{\Omega}^{c_1,c_2}\!\left(\coltwo{1}{0},s\right)+\widetilde{\b}_0(s) & \mbox{ if } k = 0, \\
\widetilde{\b}_k(s) & \mbox{ if } k \neq 0.
\end{array}\right.
\end{equation}
\Cref{prop:bigabs} also implies that we can switch the order of integration to compute 
\begin{align}
\widetilde{\b}_k(s) &= \int_0^\infty\int_0^1 \widetilde{h}(\xi,t)e(-k\xi)\,d\xi \,t^s \frac{dt}{t} \\
&= \sum_{n_2 | k} \abs{n_2} \int_{C\left(\frac{k}{n_2},n_2\right)} e\!\left(n_2 p^\top\coltwo{\xi}{1}-k\xi\right)\left(-\sgn(n_2)\abs{n_2}^{-2s}\k_{\Omega}^{c_1,c_2}(\xi,s)\right) \,d\xi \\
&= -\sum_{n_2 | k} \frac{\sgn(n_2)}{\abs{n_2}^{2s-1}} \int_{C\left(\frac{k}{n_2},n_2\right)} e\!\left(n_2 (p_1\xi+p_2)-k\xi\right)\k_{\Omega}^{c_1,c_2}(\xi,s) \,d\xi.
\end{align}

\subsection{Series manipulations}

In this subsection, we set $\xi=0$ in \cref{eq:fish}. We will manipulate the right-hand side of this equation to prove \Cref{thm:KLF30}. First of all, we have
\begin{align}
\widehat{\zeta}_{p,0}^{c_1,c_2}\left(\Omega, s\right)
&= \sum_{k=-\infty}^\infty \b_k(s) \\
&= -\left(\Li_{2s}(e(p_1))-\Li_{2s}(e(-p_1))\right)\k_{\Omega}^{c_1,c_2}\!\left(\coltwo{1}{0},s\right) + \sum_{k=-\infty}^\infty \widetilde{\b}_k(s).
\end{align}
We will rewrite the sum of the $\widetilde{\b}_k(s)$ using the substitution $(d_1,d_2) = (\frac{k}{n_2},n_2)$. The following manipulation is legal by \Cref{prop:bigabs}.
\begin{align}
\sum_{k=-\infty}^\infty \widetilde{\b}_k(s)
&= -\sum_{k \in \Z} \sum_{\substack{n_2 | k \\ n_2 \neq 0}} \frac{\sgn(n_2)}{\abs{n_2}^{2s-1}} \int_{C\left(\frac{k}{n_2},n_2\right)} e\!\left(n_2 (p_1\xi+p_2)-k\xi\right)\k_{\Omega}^{c_1,c_2}(\xi,s) \,d\xi \\
&= -\sum_{d_1 \in \Z} \sum_{d_2 \in \Z \setminus \{0\}} \frac{\sgn(d_2)}{\abs{d_2}^{2s-1}} \int_{C\left(d_1,d_2\right)} e\!\left(d_2 (p_1\xi+p_2)-d_1d_2\xi\right)\k_{\Omega}^{c_1,c_2}(\xi,s) \,d\xi.
\end{align}
Split up the series into four pieces.
\begin{align}
\sum_{k=-\infty}^\infty \widetilde{\b}_k(s)
&= -\sum_{d_1 > 0} \sum_{d_2 > 0} \frac{e(d_2p_2)}{\abs{d_2}^{2s-1}} \int_{C^-} e\!\left(-(d_1-p_1)d_2\xi\right)\k_{\Omega}^{c_1,c_2}(\xi,s) \,d\xi \nn\\
&\ \ \ +\sum_{d_1 > 0} \sum_{d_2 < 0} \frac{e(d_2p_2)}{\abs{d_2}^{2s-1}} \int_{C^+} e\!\left(-(d_1-p_1)d_2\xi\right)\k_{\Omega}^{c_1,c_2}(\xi,s) \,d\xi \nn\\
&\ \ \ -\sum_{d_1 \leq 0} \sum_{d_2 > 0} \frac{e(d_2p_2)}{\abs{d_2}^{2s-1}} \int_{C^+} e\!\left(-(d_1-p_1)d_2\xi\right)\k_{\Omega}^{c_1,c_2}(\xi,s) \,d\xi \nn\\
&\ \ \ +\sum_{d_1 \leq 0} \sum_{d_2 < 0} \frac{e(d_2p_2)}{\abs{d_2}^{2s-1}} \int_{C^-} e\!\left(-(d_1-p_1)d_2\xi\right)\k_{\Omega}^{c_1,c_2}(\xi,s) \,d\xi \\
&= -\sum_{d_1 > 0} \sum_{d_2 > 0} \frac{e(d_2p_2)}{\abs{d_2}^{2s-1}} \int_{C^+} e\!\left((d_1-p_1)d_2\xi\right)\k_{\Omega}^{c_1,c_2}(-\xi,s) \,d\xi \nn\\
&\ \ \ +\sum_{d_1 > 0} \sum_{d_2 < 0} \frac{e(d_2p_2)}{\abs{d_2}^{2s-1}} \int_{C^+} e\!\left(-(d_1-p_1)d_2\xi\right)\k_{\Omega}^{c_1,c_2}(\xi,s) \,d\xi \nn\\
&\ \ \ -\sum_{d_1 \leq 0} \sum_{d_2 > 0} \frac{e(d_2p_2)}{\abs{d_2}^{2s-1}} \int_{C^+} e\!\left(-(d_1-p_1)d_2\xi\right)\k_{\Omega}^{c_1,c_2}(\xi,s) \,d\xi \nn\\
&\ \ \ +\sum_{d_1 \leq 0} \sum_{d_2 < 0} \frac{e(d_2p_2)}{\abs{d_2}^{2s-1}} \int_{C^+} e\!\left((d_1-p_1)d_2\xi\right)\k_{\Omega}^{c_1,c_2}(-\xi,s) \,d\xi \\
&= -\sum_{d_1 > 0} \sum_{d_2 > 0} \frac{e(d_2p_2)}{d_2^{2s-1}} \int_{C^+} e\!\left((d_1-p_1)d_2\xi\right)\k_{\Omega}^{c_1,c_2}(-\xi,s) \,d\xi \nn\\
&\ \ \ +\sum_{d_1 > 0} \sum_{d_2 > 0} \frac{e(-d_2p_2)}{d_2^{2s-1}} \int_{C^+} e\!\left((d_1-p_1)d_2\xi\right)\k_{\Omega}^{c_1,c_2}(\xi,s) \,d\xi \nn\\
&\ \ \ -\sum_{d_1 \geq 0} \sum_{d_2 > 0} \frac{e(d_2p_2)}{d_2^{2s-1}} \int_{C^+} e\!\left((d_1+p_1)d_2\xi\right)\k_{\Omega}^{c_1,c_2}(\xi,s) \,d\xi \nn\\
&\ \ \ +\sum_{d_1 \geq 0} \sum_{d_2 > 0} \frac{e(-d_2p_2)}{d_2^{2s-1}} \int_{C^+} e\!\left((d_1+p_1)d_2\xi\right)\k_{\Omega}^{c_1,c_2}(-\xi,s) \,d\xi.
\end{align}
Now, move the contour integral outside the sums, and rewrite the series as
\begin{align}
&\sum_{k=-\infty}^\infty \widetilde{\b}_k(s)
= \int_{C+} \left( \sum_{d_2 \geq 0} \frac{e(-p_2+p_1\xi)^{d_2}}{d_2^{2s-1}}\k_{\Omega}^{c_1,c_2}(-\xi,s) - \sum_{d_2 \geq 0} \frac{e(p_2+p_1\xi)^{d_2}}{d_2^{2s-1}}\k_{\Omega}^{c_1,c_2}(\xi,s)\right. \\
&\hspace{20pt}+\sum_{d_1 > 0} \sum_{d_2 > 0} \frac{1}{d_2^{2s-1}}\left(\left(-e\!\left((d_1-p_1)\xi+p_2\right)^{d_2}+e\!\left((d_1+p_1)\xi-p_2\right)^{d_2}\right)\k_{\Omega}^{c_1,c_2}(-\xi,s)\right. \nn \\
&\hspace{85pt}+\left.\left.\left(e\!\left((d_1-p_1)\xi-p_2\right)^{d_2}-e\!\left((d_1+p_1)\xi+p_2\right)^{d_2}\right)\k_{\Omega}^{c_1,c_2}(\xi,s)\right)\right)\,d\xi. \nn
\end{align}
Setting $s=1$, we obtain
\begin{align}
&\sum_{k=-\infty}^\infty \widetilde{\b}_k(1) \\
&= \int_{C+} \left(-\log(1-e(-p_2+p_1\xi))\k_{\Omega}^{c_1,c_2}\!\coltwo{-\xi}{1}+\log(1-e(p_2+p_1\xi))\k_{\Omega}^{c_1,c_2}\!\coltwo{\xi}{1}\right. \nn \\
&\hspace{10pt}+\sum_{d_1 = 1}^\infty \left(\left(\log\left(1-e\!\left((d_1-p_1)\xi+p_2\right)\right)-\log\left(1-e\left((d_1+p_1)\xi-p_2\right)\right)\right)\k_{\Omega}^{c_1,c_2}\!\coltwo{-\xi}{1}\right. \nn \\
&\hspace{15pt}\left.\left.\left(-\log\left(1-e\!\left((d_1-p_1)\xi-p_2\right)\right)+\log\left(1-e\!\left((d_1+p_1)\xi+p_2\right)\right)\right)\k_{\Omega}^{c_1,c_2}\!\coltwo{\xi}{1}\right)\right)\,d\xi. \nn
\end{align}
We want to write this sum of logarithms as a logarithm of a product, but there is the issue of the choice of branch. In order to make a clear choice, let
\begin{equation}
\varphi_{p_1,p_2}(\xi) := \left(1-e\left(p_1\xi+p_2\right)\right)\prod_{d=1}^\infty\frac{1-e\left((d+p_1)\xi+p_2\right)}{1-e\left((d-p_1)\xi-p_2\right)}
\end{equation}
for $\xi \in \HH$. 
This is a function on the upper half-plane which is never zero, and the upper half-plane is simply connected, so it has a choice of continuous logarithm. Let $\left(\Log \varphi_{p_1,p_2}\right)(\xi)$ be the branch such that
\begin{equation}
\lim_{\xi \to i\infty} \left(\Log\varphi_{p_1,p_2}\right)(\xi) = \left\{\begin{array}{ll}
\log(1-e(p_2)) & \mbox{ if } p_1 = 0, \\
0 & \mbox{ if } p_1 \neq 0.
\end{array}\right.
\end{equation}
Here $\log(1-e(p_2))$ is the standard principal branch. Thus,
\begin{align}
\sum_{k=-\infty}^\infty \widetilde{\b}_k(1) &= 
\int_{C+} \left(-\left(\Log\varphi_{p_1,-p_2}\right)(\xi)\cdot\k_{\Omega}^{c_1,c_2}\!\coltwo{-\xi}{1}\right.\nn \\
&\hspace{50pt}\left.+\left(\Log\varphi_{p_1,p_2}\right)(\xi)\cdot\k_{\Omega}^{c_1,c_2}\!\coltwo{\xi}{1}\right)\,d\xi.
\end{align}
Adding back the other piece of $\beta_0(1)$ into $\ds \widehat{\zeta}^{c_1,c_2}_{p,0}(\Omega,1)=\sum_{k=-\infty}^\infty \b_k(1)$, we obtain
\begingroup
\allowdisplaybreaks[0]
\begin{align}\label{eq:slow}
\widehat{\zeta}^{c_1,c_2}_{p,0}(\Omega,1) &= 
-\left(\Li_2(e(p_1))-\Li_2(e(-p_1))\right)\k^{c_1,c_2}_{\Omega}\!\coltwo{1}{0} \\
&\ \ \ +\int_{C+} \left(-\left(\Log\varphi_{p_1,-p_2}\right)(\xi)\cdot\k_{\Omega}^{c_1,c_2}\!\coltwo{-\xi}{1}\right.\\
&\hspace{56pt}\left.+\left(\Log\varphi_{p_1,p_2}\right)(\xi)\cdot\k_{\Omega}^{c_1,c_2}\!\coltwo{\xi}{1}\right)\,d\xi.
\end{align}
\endgroup

\subsection{Collapsing the contour onto the branch cuts}

We could declare ourselves done at this point. \Cref{eq:slow} is a formula for $\widehat{\zeta}^{c_1,c_2}_{p,0}(\Omega,1)$, as we desired, and it appears very difficult to evaluate or simplify the contour integral in any way. However, \cref{eq:slow} is not a useful formula for computation because the integral converges slowly. The integrand decays polynomially as $\xi \to \pm \infty$ along the horizontal contour $C^+$.

We will obtain a Kronecker limit formula with rapid convergence by shifting the contour so that the integrand decays exponentially. In doing so, we will also split up the formula as a difference of a $c_1$-piece and a $c_2$-piece. The movement of the contour is shown in \Cref{fig:contour}.

Let $\Lambda_\Omega^c := \Omega - \frac{i}{Q_M(c)}Mcc^\top M$ for $c=c_1,c_2$, as we did in \Cref{cor:k1}. 
Factor the quadratic polynomial $Q_{\Lambda_\Omega^c}\smcoltwo{\xi}{1}$ in $\xi$,
\begin{equation}
Q_{\Lambda_\Omega^c}\!\coltwo{\xi}{1}= \alpha(c)(\xi-\t_1(c))(\xi-\t_2(c)).
\end{equation}
Since $\Lambda_\Omega^c \in\HH_2^{(0)}$ by Lemma 3.6 of \cite{kopp1}, 
we know by \Cref{lem:roots} that we may choose
$\t_1(c)$ to be in the upper half-plane and $\t_2(c)$ in the lower half-plane.

The complex function $\xi \mapsto \k^{c}_{\Omega}\!\smcoltwo{\xi}{1}$ has branch cuts along the vertical ray from $\tau_1(c)$ to $i\infty$ and the vertical ray from $\tau_2(c)$ to $-i\infty$.
We check that this function is holomorphic away from these branch cuts. Since 
$\k_{\Omega}^{c}\!\smcoltwo{\xi}{1}$ has simple poles 
at the roots $\xi = r_1,r_2$ of $Q_\Omega\!\smcoltwo{\xi}{1} = 0$, we must check that the residues at the poles cancel when taking the difference
$
\k_{\Omega}^{c_1,c_2}\!\smcoltwo{\xi}{1} = \k_{\Omega}^{c_2}\!\smcoltwo{\xi}{1} - \k_{\Omega}^{c_1}\!\smcoltwo{\xi}{1}
$.
We have
\begin{align}
&\underset{\xi \to r_1}{\res}\k_{\Omega}^{c}\!\coltwo{\xi}{1} \nn \\
&= \lim_{\xi \to r_1} (\xi-r_1)\frac{c^\top M \coltwo{\xi}{1}}{2\pi i Q_\Omega\!\coltwo{\xi}{1}\sqrt{\left(c^\top M \coltwo{\xi}{1}\right)^2-2iQ_M(c)Q_\Omega\!\colxi}} \\
&= \lim_{\xi \to r_1} \frac{c^\top M \coltwo{\xi}{1}}{\pi i \omega_{11}(\xi-r_2)\sqrt{\left(c^\top M \coltwo{\xi}{1}\right)^2-2iQ_M(c)Q_\Omega\!\colxi}} \\
&= \frac{1}{\pi i \omega_{11}(r_1-r_2)},
\end{align}
and similarly, $\underset{\xi \to r_2}{\res}\k_{\Omega}^{c}\!\smcoltwo{\xi}{1} = \frac{1}{\pi i \omega_{11}(r_2-r_1)}$. These residues do not depend on $c$, so they cancel, and $\k_{\Omega}^{c_1,c_2}\!\smcoltwo{\xi}{1}$ is holomorphic at $r_1$ and $r_2$.

\begin{figure}\label{fig:contour}
\includegraphics[scale=.57]{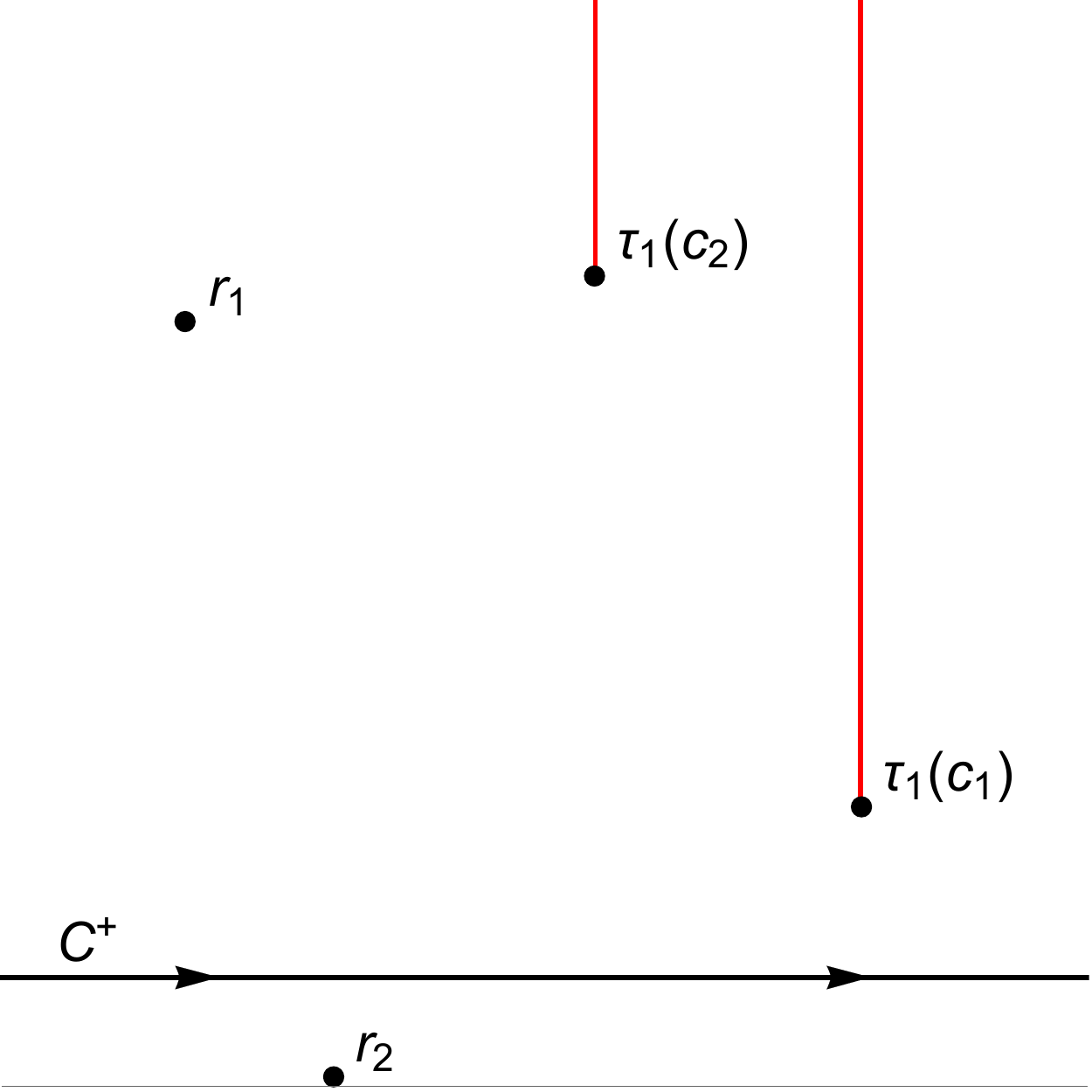} \ \ \
\includegraphics[scale=.57]{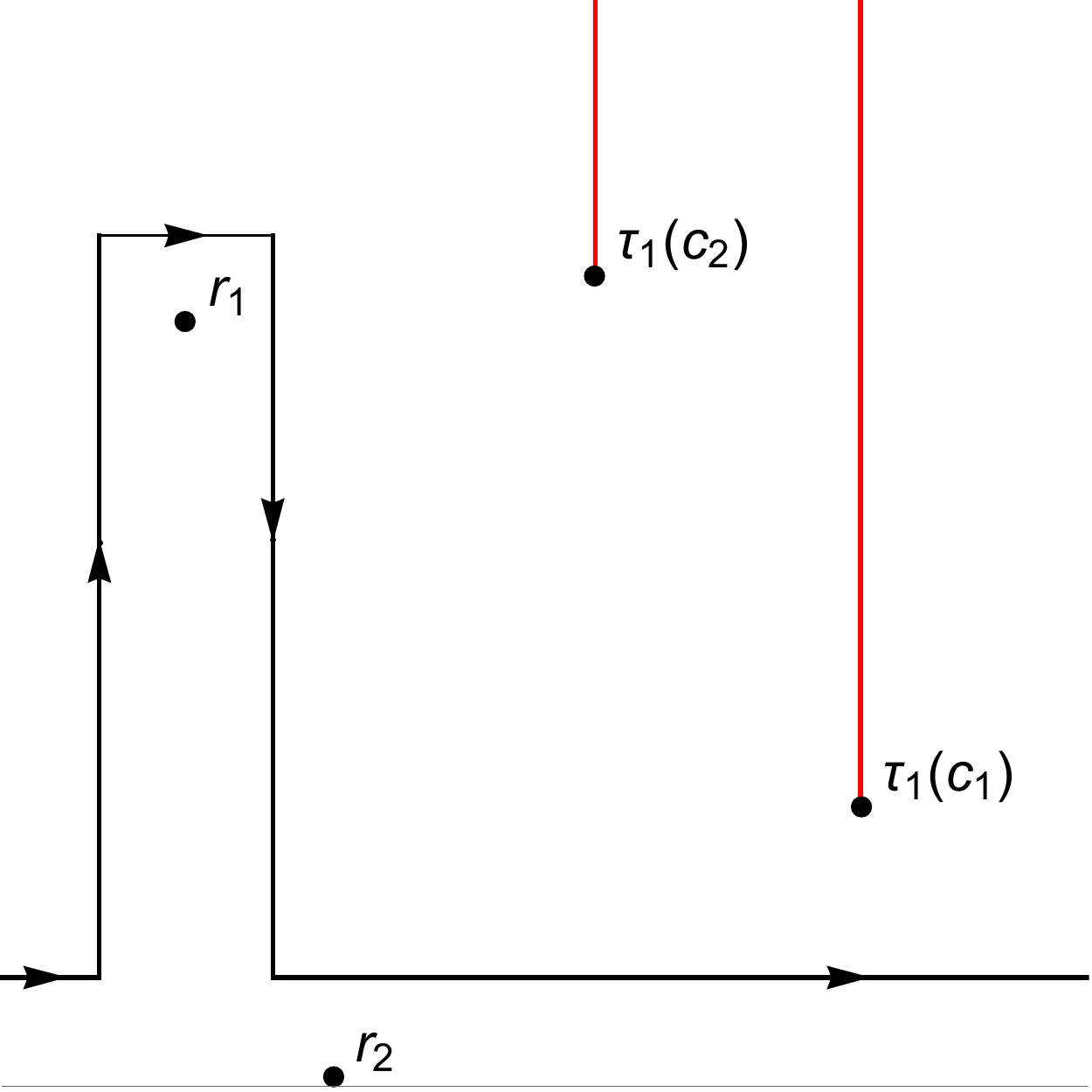} \\ \ \\
\includegraphics[scale=.57]{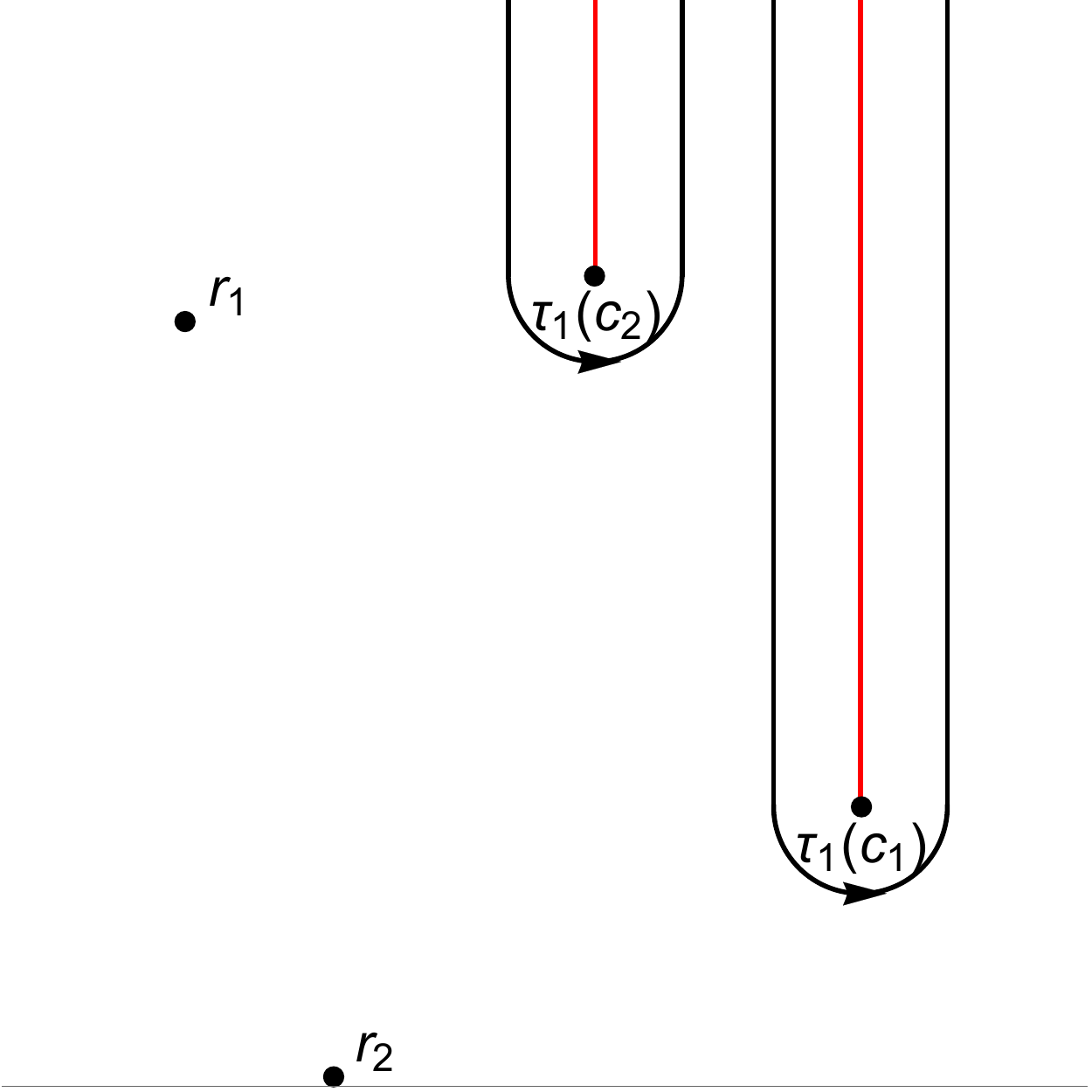} \ \ \
\includegraphics[scale=.57]{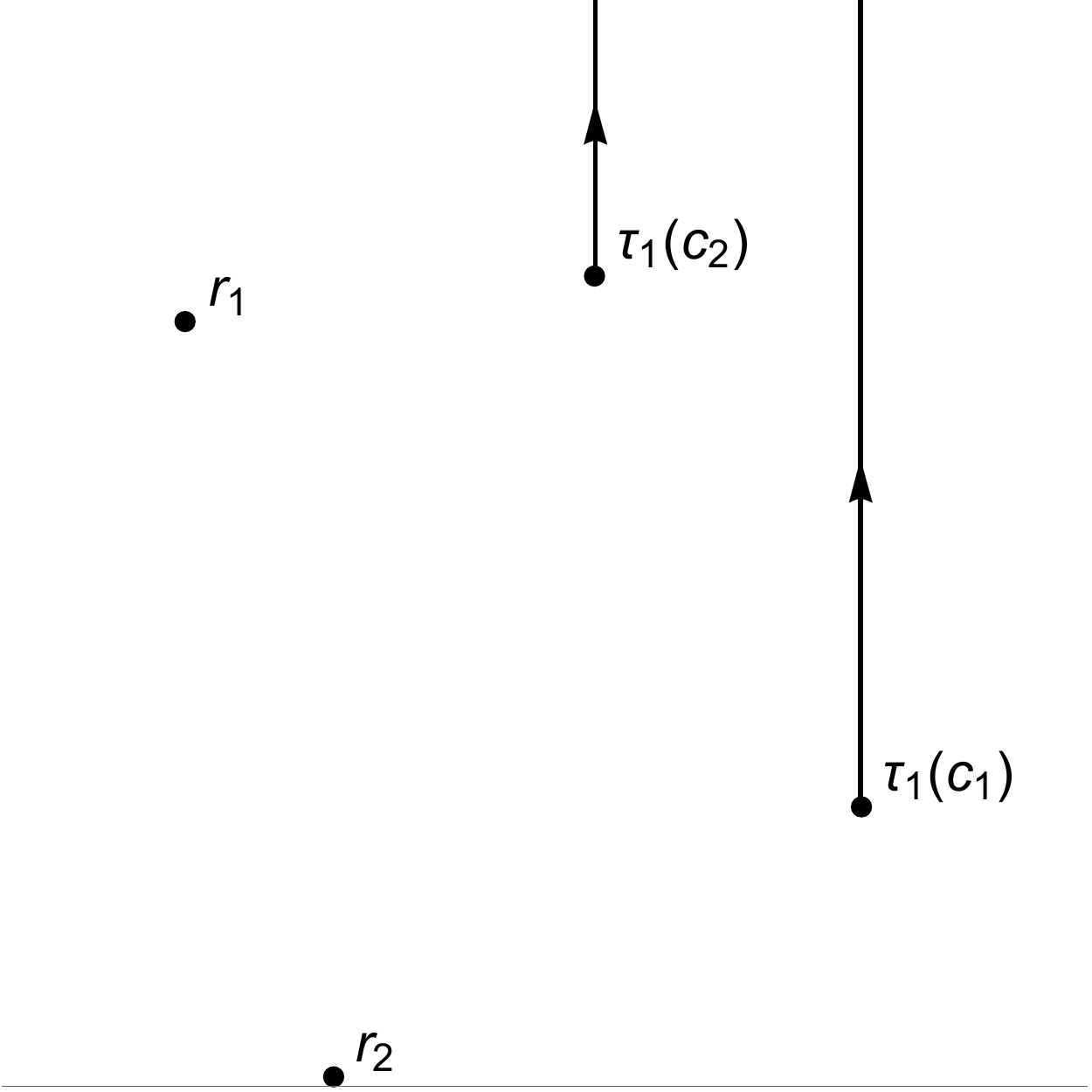}
\caption[Shifting the countour of integration.]{The contour $C^+$ is moved above the poles of $\kappa^c_\Omega\smcoltwo{\xi}{1}$, then collapsed onto branch cuts.}
\end{figure}

Move the countours of integration above the zeros of $Q_\Omega\smcoltwo{\pm \xi}{1}$. Now we may safely split up the integral into a term for $c_1$ and a term for $c_2$.

Now we retract the integral onto the branch cut. 
As $\xi = \pm\t^\pm + \e$ and $\e \to 0$, the denominator of the integrand blows up like $\e^{1/2}$, so the integral converges. The integrand changes sign when we cross the branch cut. Thus, \cref{eq:slow} becomes
\begin{align}
\widehat{\zeta}^{c_1,c_2}_{p,0}(\Omega,1) &= 
I^+(c_2) - I^-(c_2) - I^+(c_1) + I^-(c_1),
\end{align}
where
\begin{align}
I^{\pm}(c) &:= 
-\Li_2(e(\pm p_1))\k^{c}_{\Omega}\coltwo{1}{0} \nn \\
&\ \ \ +2i\int_0^\infty \left(\Log \varphi_{p_1,\pm p_2}\right)(\pm\tau^{\pm}(c)+it) \k_{\Omega}^{c}\coltwo{\pm\left(\tau^{\pm}(c)+it\right)}{1}\,dt.
\end{align}
We have now proven \Cref{thm:KLF30}. \Cref{thm:KLF3} follows by specializing the variables, setting $\Omega = iM$ and restricting to $c_1,c_2 \in \R^g$. \Cref{thm:KLF40} and \Cref{thm:KLF4} both follow by application of the functional equation (\Cref{thm:zetafun}).

\section{Example}\label{sec:example}
We conclude with an example to show how to use the Kronecker limit formula for indefinite zeta functions to compute Stark units. This example was introduced in Section 7.1 of \cite{kopp1}.

Let $K = \Q(\sqrt{3})$, so $\OO_K = \Z[\sqrt{3}]$, and let $\cc = 5\OO_K$.
The ray class group $\Cl_{\cc\infty_2}(\OO_K) \isom \Z/8\Z$.
The fundamental unit $\e = 2+\sqrt{3}$ is totally positive: $\e\e' = 1$. It has order $3$ modulo $5$: $\e^3 = 26+15\sqrt{3} \con 1 \Mod{5}$.
In this section, we use the Kronecker limit formula for indefinite zeta functions to compute $Z_I'(0)$, where $I$ is the principal ray class of $\Cl_{\cc\infty_2}(\OO_K)$.

Let $M = \smmattwo{2}{0}{0}{-6}$, $q = \smcoltwo{1/5}{0}$, and $c_1\in \R^2$ any column vector with the property that $c_1^\top M c_1 < 0$, such as $c_1 = \smcoltwo{0}{1}$.
By \Cref{cor:special} and the discussion in Section 7.1 of \cite{kopp1}, we have
\begin{equation}\label{eq:zetaeg}
Z_I'(0) = \widehat{\zeta}^{c_1,P^3c_1}_{0,q}(\Omega,0),
\end{equation}
where $\Omega = iM$ and $P = \smmattwo{2}{3}{1}{2}$.

Now we want to use \Cref{thm:KLF4} to compute the right-hand side of \cref{eq:zetaeg}.  If we try to do so directly, we obtain $P^3 = \smmattwo{26}{45}{15}{26}$, $P^3c_1 = \smcoltwo{45}{26}$, $\ol{\Omega}P^3c_1 = 6i \smcoltwo{-15}{26}$, and $\Lambda_{-\Omega^{-1}}^{\ol{\Omega}P^3c_1} = -i\smmattwo{675}{390}{390}{676/3}$. The root of $Q_{\Lambda_{-\Omega^{-1}}^{\ol{\Omega}P^3 c_1}}\!\smcoltwo{\xi}{1}$ in the upper half-plane (equivalently, the branch point of $\kappa^{\ol{\Omega}P^3 c_1}_{-\Omega^{-1}}\!\smcoltwo{\xi}{1}$ in the upper half-plane) is $\xi = \frac{-2340+i\sqrt{3}}{4053}$, which is very close to the real axis. That means we'd need to use about $\frac{\log(10)N}{2\pi\sqrt{3}/4053} \approx 857.5N$ terms in the product expansion of $\varphi_{-q_1,q_2}(\xi)$ to compute $Z_I'(0)$ to $N$ decimal places of accuracy. We technically have exponential decay, but it's not very useful.

It is much more practical to break up the zeta function into pieces. We can also improve the rate of convergence by choosing $c_1$ optimally; here, we will use $c = \smcoltwo{-1}{1}$ in place of $c_1$. We have
\begin{align}
Z_I'(0) 
&= \widehat{\zeta}^{c,P^3c}_{0,q}(\Omega,0) \\
&= \widehat{\zeta}^{c,Pc}_{0,q}(\Omega,0) + \widehat{\zeta}^{Pc,P^2c}_{0,q}(\Omega,0) + \widehat{\zeta}^{P^2c,P^3c}_{0,q}(\Omega,0) \\
&= \widehat{\zeta}^{c,Pc}_{0,q}(\Omega,0) + \widehat{\zeta}^{c,Pc}_{0,q'}(\Omega,0) + \widehat{\zeta}^{c,Pc}_{0,q''}(\Omega,0),
\end{align}
where $q = \frac{1}{5}\smcoltwo{1}{0}$, $q'= \frac{1}{5}\smcoltwo{2}{1}$, and $q''= \frac{1}{5}\smcoltwo{2}{4}$ are obtained from the residues of the global units $\e^0, \e^1, \e^2$ modulo $5$. 

Now, we have $\kappa_{-\Omega^{-1}}^{\ol{\Omega} c}\!\smcoltwo{\xi}{1} = \frac{-3\sqrt{6}(\xi-1)}{\pi(3\xi^2-1)\sqrt{3\xi^2-3\xi+1}}$ and $\kappa_{-\Omega^{-1}}^{\ol{\Omega}Pc}\!\smcoltwo{\xi}{1} = \frac{3\sqrt{6}(\xi+1)}{\pi(3\xi^2-1)\sqrt{3\xi^2+3\xi+1}}$, 
with branch points in the upper half-plane at $\xi = \frac{3+i\sqrt{3}}{6}$ and $\xi = \frac{-3+i\sqrt{3}}{6}$, respectively. 
We thus need to use about $\frac{\log(10)N}{2\pi\sqrt{3}/6} \approx 1.269N$ terms in the product expansion of each of the functions $\varphi_{-q_1,-q_2}(\xi)$, $\varphi_{-q_1',-q_2'}(\xi)$, and $\varphi_{-q_1'',-q_2''}(\xi)$ to compute $Z_I'(0)$ to $N$ decimal places of accuracy by this method. 
For $q, q', q''$, we computed the corresponding values of the integrals $J(c), J'(c), J''(c)$ and $J(Pc), J'(Pc), J''(Pc)$ given by \cref{eq:klf4int}. The computation was performed in Mathematica using numerical integral of the first 40 terms of the product expansion of each $\varphi$. For the differences of the two integrals, we obtain
\begin{align}
J(Pc)-J(c) \approx \ &-0.05923843917544488329354507987 \nn \\ &+ 3.65687839020311786132893850239 i, \\
J'(Pc)-J'(c) \approx \ &-1.33733021085943469210685014899 \nn \\ &+ 0.52477812529424663387556899167 i, \mbox{ and}\\
J''(Pc)-J''(c) \approx \ &2.64057587271922212456484190607 \nn \\ &+ 0.52477812529424663387556899167 i.
\end{align}
For the ray class zeta value, we thus calculate using \Cref{thm:KLF4} that
\begin{align}
Z_I'(0) &= \widehat{\zeta}^{c,Pc}_{0,q}(\Omega,0) + \widehat{\zeta}^{c,Pc}_{0,q'}(\Omega,0) + \widehat{\zeta}^{c,Pc}_{0,q''}(\Omega,0) \\
&= \frac{2i}{\sqrt{\det{M}}}\im(J(Pc)-J(c))+\frac{2i}{\sqrt{\det{M}}}(J'(Pc)-J'(c))+\frac{2i}{\sqrt{\det{M}}}(J''(Pc)-J''(c)) \\
&= \frac{1}{2\sqrt{3}}\im\left(J(Pc)-J(c)+J'(Pc)-J'(c)+J''(Pc)-J''(c)\right) \\
&\approx 1.35863065339220816259511308230.
\end{align}

This agrees (to 30 decimal digits) with the computations described in Section 7.1 of \cite{kopp1}. The conjectural Stark unit is $\exp(Z_I'(0)) \approx 3.89086171394307925533764395962$. This number is appears to be the root of the polynomial
\begin{align}
x^8 &- (8 + 5\sqrt{3})x^7 + (53 + 30\sqrt{3})x^6 - (156 + 90\sqrt{3})x^5 + (225 + 130\sqrt{3})x^4\nn\\ 
 &- (156 + 90\sqrt{3})x^3 + (53 + 30\sqrt{3})x^2 - (8 + 5\sqrt{3})x + 1,
\end{align}
which we have verified lies in the appropriate class field.

\section{Acknowledgements}

This research was partially supported by National Science Foundation (USA) grants DMS-1401224, DMS-1701576, and DMS-1045119, and by the Heilbronn Institute for Mathematical Research (UK).

This paper incorporates material from the author's PhD thesis \cite{koppthesis}. Thank you to Jeffrey C. Lagarias for advising my PhD and for many helpful conversations about the content of this paper. Thank you to Marcus Appleby, Jeffrey C. Lagarias, and Kartik Prasanna for helpful comments and corrections.

\section{Conflict of interest statement}

On behalf of all authors, the corresponding author states that there is no conflict of interest.

\bibliographystyle{plain}
\bibliography{references}

\end{document}